\documentclass[a4paper,12pt]{article}

\renewenvironment{abstract}
               {\list{}{\rightmargin\leftmargin}%
                \item[\hspace*{1cm}\small\textbf{Abstract ---}]\relax}
               {\endlist}

\usepackage[top=2cm, bottom=2cm, left=2cm, right=2cm]{geometry}
\usepackage[fleqn]{amsmath}
\usepackage{amssymb}
\usepackage{enumerate}
\usepackage{enumitem}
\usepackage{amsthm,bm}
\usepackage{appendix}
\usepackage{subfigure}
\usepackage{sidecap}
\usepackage{float}
\usepackage[pdftex]{graphicx}
\usepackage{mathrsfs}
\newtheorem{Theorem}{Theorem}[section]
\newtheorem{Definition}[Theorem]{Definition}
\newtheorem{Remark}[Theorem]{Remark}
\newtheorem{Proposition}[Theorem]{Proposition}
\newtheorem{Axiom}[Theorem]{Axiom}
\newtheorem{Example}[Theorem]{Example}
\newtheorem{Conjecture}[Theorem]{Conjecture}
\newtheorem{Notation}[Theorem]{Notation}
\newtheorem{Rule}[Theorem]{Inference Rule}
\def\rotatechar#1{\rotatebox[origin=c]{180}{#1}}
\begin{document}

\title{\large\bf A marriage of category theory and set theory:\\ a finitely axiomatized nonstandard first-order theory implying ZF}

\author{
        Marcoen J.T.F. Cabbolet\footnote{e-mail: Marcoen.Cabbolet@vub.ac.be}\\
        \small{\textit{Center for Logic and Philosophy of Science, Vrije Universiteit Brussel}}\\
        }
\date{}

\maketitle

\begin{abstract}\small
It is well known that ZFC, despite its usefulness as a foundational theory for mathematics, has two unwanted features: it cannot be written down explicitly due to its infinitely many axioms, and it has a countable model due to the L\"{o}wenheim-Skolem theorem. This paper presents the axioms one has to accept to get rid of these two features. For that matter, some twenty axioms are formulated in a nonstandard first-order language with countably many constants: to this collection of axioms is associated a universe of discourse consisting of a class of objects, each of which is a set, and a class of arrows, each of which is a function. The axioms of ZF are derived from this finite axiom scheme, and it is shown that it does not have a countable model---if it has a model at all, that is. Furthermore, the axioms of category theory are proven to hold: the present universe may therefore serve as an ontological basis for category theory. However, it has not been investigated whether any of the soundness and completeness properties hold for the present theory: the inevitable conclusion is therefore that only further research can establish whether the present results indeed constitute an advancement in the foundations of mathematics.
\end{abstract}

\section{Introduction}
Zermelo-Fraenkel set theory, which emerged from the axiomatic set theory proposed by Zermelo in \cite{Zermelo} by implementing improvements suggested independently by Skolem and Fraenkel, is without a doubt the most widely accepted foundational theory for mathematics. However, it has two features that we may call `unwanted' or `pathological'. Let us, in accordance with common convention, use ZFC to denote the full theory, ZF for the full theory minus the axiom of choice (AC), and let us use ZF(C) in statements that are to hold for both ZF and ZFC; the first pathological feature is then that the number of axioms of ZF(C) is infinite, as a result of which ZF(C) cannot be written down explicitly. The second is that ZF(C) has a countable model (if it has a model at all), which is a corollary of the downward L\"{o}wenheim-Skolem theorem \cite{Lowenheim,Skolem}: as a result, we have to swallow that there is a model of ZF(C) in which the powerset of the natural numbers is countable. The purpose of this paper is to present the axioms one has to accept such that the axioms of ZF can be derived from the new axioms---we omit a discussion of AC---but such that these two pathological features are \emph{both} removed. The collection of these new axioms will henceforth be referred to with the symbol $\mathfrak{T}$ (a Gothic `T'): it is thus the case that $\mathfrak{T}$ has a finite number of non-logical axioms of finite length. The remainder of this section is intended as a global introduction to $\mathfrak{T}$---the purpose is not, however, to review every idea ever published on this topic.

First of all, at least one set theory that can be finitely axiomatized has already been suggested, namely Von Neumann-G\"{o}del-Bernays set theory (NGB)---NGB is provably a mere conservative extension of ZF, and as such ``it seems to be no stronger than ZF'' \cite{Mendelson}. However, NGB shares the second of the two aforementioned unwanted features with ZF. In that regard, Von Neumann has been quoted stating
\begin{quote}
  ``At present we can do no more than note that we have one more reason here to entertain reservations about set theory and that for the time being no way of rehabilitating this theory is known.'' \cite{VanDalen}
\end{quote}
That said, a finite theory in the same language as ZF (without extra objects) and \emph{as strong as} ZF has already been proved impossible by Montague \cite{Montague}: the present finitely axiomatized theory $\mathfrak{T}$ therefore entails a rather drastic departure from the language and ontology of ZF. Earlier a radical departure has already been suggested by Lawvere, who formulated a theory of the category of sets: here the $\in$-relation has been defined in terms of the primitive notions of category theory, that is, in terms of mappings, domains and codomains \cite{Lawvere}. The present theory $\mathfrak{T}$, however, is more of a ``marriage''--by lack of a better term---between set theory and category theory: the $\in$-relation is maintained as an atomic expression, while the notion of a category is built in as the main structural element. So regarding the philosophical position on the status of category theory, here neither Lawvere's position is taken, that category theory provides the foundation for mathematics \cite{Lawvere2}, nor Mayberry's position that category theory requires set theory as a foundation \cite{Mayberry}, nor Landry's position that category theory provides (all of) the language for mathematics \cite{Landry}: instead, the present position is that category theory is \emph{incorporated} in the foundations.

Category theory being incorporated means that the universe of (mathematical) discourse is not Cantor's paradise of sets, but a category:

\begin{Definition}\label{def:universe}\rm
The \textbf{universe of discourse} is a category $\mathscr{C}$ consisting of
\begin{enumerate}[label=(\roman*)]
\item a proper class of objects, each of which is a \textbf{set};
\item a proper class of arrows, each of which is a \textbf{function}.
\end{enumerate}
$\Box$
\end{Definition}
\noindent As to the meaning of the term ‘universe of discourse’ in Def. \ref{def:universe}, the following quote from a standard textbook is interesting:
\begin{quote}
``Instead of having to say \emph{the entities to which the predicates might in principle apply}, we can make things easier for ourselves by collectively calling these entities the \emph{universe of discourse}.'' \cite{Gamut}
\end{quote}
So, a theory \emph{without} a universe of discourse is nothing but a scheme of meaningless strings of symbols, which are its axioms; an inference rule is then nothing but a rule by which (other) meaningless strings of symbols can be ``deduced'' from those axioms. Such a notion of `theory' might be acceptable from the perspective of a formalist, but here the position is taken that theories c.q. strings of symbols without meaning are \emph{not interesting}. So, to the present theory $\mathfrak{T}$ is associated a Platonic universe of discourse---in casu the category $\mathscr{C}$ of Def. \ref{def:universe}---which we can think of as being made up of the things that satisfy the axioms of $\mathfrak{T}$. But that does not mean that for every thing in the universe of discourse a constant needs to be included in the formal language: the vocabulary contains only countably many constants. But $\mathfrak{T}$ has been formulated with an \emph{intended model} in mind: its universe is then a ``Platonic imitation'' of the universe of discourse. Below we briefly elaborate on the universe of discourse using set-builder notation: strictly speaking this is not a part of the formal language, but given its status as a widely used tool for the description of sets it is suitable for an \emph{informal} introductory exposition that one can hold in the back of one's mind.

For starters, the primitive notion of a set is, of course, that it is an object made up of elements: a thing $\alpha$ being an element of a set $S$ is formalized by an irreducible $\in$-relation, that is, by an atomic expression of the form $\alpha \in S$. The binary predicate `$\in$' is part of the language for $\mathfrak{T}$: there is no need to express it in the language of category theory, since that does not yield a simplification. (Hence the language of the present theory is not reduced to the language of category theory.) In ZF we then have the adage `everything is a set', meaning that if we have $x \in y$, then $x$ is a set too. Here, however, that adage remains valid in this proper class of objects only to the extent that all the \emph{objects} are sets---the adage does not hold for all elements of all sets. That is, we will assume that every object of the category is either the empty set or an object that contains elements, but an element of an object---if any---can either be the empty set, or again an object made up of elements, or a function: a function is then \underline{not} a set. A number of constructive set-theoretical axioms then describe in terms of the $\in$-relation which sets there are \emph{at least} in this proper class of sets; these axioms are very simple theorems of ZF that hardly need any elaboration. 

As to the notion of a function, in the framework of ZF a function is \emph{identified} with its graph. However, as hinted at above, here we reject that set-theoretical reduction. First of all, functions are \emph{objects sui generis}. If we use  simple symbols like $X$ and $Y$ for sets, then a composite symbol $f_X$---to be pronounced ``f-on-X''---can be used for a function on $X$: in the intended model, a function is a thing $f_X$ where $f$ stands for the graph of the function and $X$ for its domain. To give an example, let the numbers 0 and 1 be defined as sets, e.g. by $0 := \emptyset$ and $1 := \{\emptyset\}$, and let, for sets $x$ and $y$, a two-tuple $\langle x,y \rangle$ be defined as a set, e.g. by $\langle x,y \rangle := \{x, \{x, y\}\}$; then the composite symbol $\{\langle 0, 1\rangle,\langle 1,0\rangle\}_{\{0,1\}}$ refers to the function on the set $\{0,1\}$ whose graph is the set $\{\langle 0, 1\rangle,\langle 1,0\rangle\}$. However, we have $f_X \neq Y$ for any function on any domain $X$ and for any set $Y$, so
\begin{equation}
\{\langle 0, 1\rangle,\langle 1,0\rangle\}_{\{0,1\}} \neq \{\langle 0, 1\rangle,\langle 1,0\rangle\}
\end{equation}
That is, the function $\{\langle 0, 1\rangle,\langle 1,0\rangle\}_{\{0,1\}}$ is not identical to its graph $\{\langle 0, 1\rangle,\langle 1,0\rangle\}$---nor, in fact, to any other set $Y$. At this point one might be inclined to think that the whole idea of functions on a set as objects sui generis is superfluous, and should be eliminated in favor of the idea that functions are identified with their graphs (which are sets). That, however, has already been tried in an earlier stage of this investigation: it turned out to lead to unsolvable difficulties with the interpretation of the formalism, cf. \cite{CabboletWithdrawn}. The crux is that the main constructive axiom of the theory---here a \emph{constructive axiom} is an axiom that, when certain things are given (e.g. one or two sets or a set and a predicate), states the existence of a uniquely determined other thing \cite{Bernays}---`produces' things referred to by a symbol $F_X$: one gets into unsolvable difficulties if one tries to interpret these things as sets.

But functions are not only different things than sets. Contrary to a set, a function in addition \emph{has} a domain and a codomain---both are sets---and it also \emph{does something}: namely, it maps every element in its domain to an element in its codomain. This first aspect, that a function `has' a domain and a codomain, can be expressed in the language of category theory: an atomic formula $f_X:Y\twoheadrightarrow Z$ expresses that the function $f_X$ has domain $Y$ and codomain $Z$. In accordance with existing convention, the two-headed arrow `$\twoheadrightarrow$' expresses that $f_X$ is a surjection: the codomain is \emph{always} the set of the images of the elements of the domain under $f_X$. 
Since such an expression $f_X:Y\twoheadrightarrow Z$ is irreducible in the present framework, it requires some function-theoretical axioms to specify when such an atomic formula is true and when not. For example, for the function $\{\langle 0, 1\rangle,\langle 1,0\rangle\}_{\{0,1\}}$ discussed above we have
\begin{equation}
\{\langle 0, 1\rangle,\langle 1,0\rangle\}_{\{0,1\}} : {\{0,1\}} \twoheadrightarrow {\{0,1\}}
\end{equation}
while other expressions $\{\langle 0, 1\rangle,\langle 1,0\rangle\}_{\{0,1\}} : Y \twoheadrightarrow Z$ are false. Again, at this point one might be inclined to think that these expression $f_X:Y\twoheadrightarrow Z$ are superfluous, because the notation $f_X$ already indicates that $X$ is the domain of $f_X$. The crux, however, is that these expressions are \emph{essential} to get to a finite axiomatization: we may, then, have the opinion that it is obvious from the notation $f_X$ that $X$ is the domain of $f_X$, but only an expression $f_X:X\twoheadrightarrow Z$ expresses this fact---it is, thus, an axiom of the theory that $f_X:Y\twoheadrightarrow Z$ is only true if $Y = X$. In particular, this expression is not true if $Y \varsubsetneq X$.

The second aspect, that for any set $X$ any function $f_X$ `maps' an element $y$ of its domain to an element $z$ of its codomain is expressed by another atomic formula $f_X:y\mapsto z$. In the framework of ZF this is just a notation for $\langle y,z \rangle\in f_X$, but in the present framework this is also an irreducible expression: therefore, it requires some more function-theoretical axioms to specify when such an atomic formula is true and when not. The idea, however, is this: \textbf{given} a set $X$ and a function $f_X$, precisely one expression $f_X:y\mapsto z$ is true for each element $y$ in the domain of $f_X$. E.g. we have
\begin{gather}
\{\langle 0, 1\rangle,\langle 1,0\rangle\}_{\{0,1\}} : 0 \mapsto 1\\
\{\langle 0, 1\rangle,\langle 1,0\rangle\}_{\{0,1\}} : 1 \mapsto 0
\end{gather}
for the above function $\{\langle 0, 1\rangle,\langle 1,0\rangle\}_{\{0,1\}}$.

All the above can be expressed with a dozen and a half very simple axioms---these can, in fact, all be reformulated in the framework of ZF. The present axiomatic scheme does, however, contain `new mathematics' in the form of the second axiom of a pair of constructive, function-theoretical axioms. The first one is again very simple, and merely states that given any two singletons $X=\{x\}$ and $Y=\{y\}$, there exists an `ur-function' that maps the one element $x$ of $X$ to the one element $y$ of $Y$. An ur-function is thus a function with a singleton domain and a singleton codomain. The above function $\{\langle 0, 1\rangle,\langle 1,0\rangle\}_{\{0,1\}}$ is thus not an ur-function, but the function referred to by the symbol  $\{\langle 0, 1\rangle\}_{\{0\}}$ is: we have
\begin{gather}
\{\langle 0, 1\rangle\}_{\{0\}}: \{0\} \twoheadrightarrow \{1\} \\
\{\langle 0, 1\rangle\}_{\{0\}}: 0 \mapsto 1
\end{gather}
That said, the second of the pair of constructive, function-theoretical axioms is a new mathematical principle: it states that given any family of ur-functions $f_{\{j\}}$ indexed in a set $Z$, there exists a sum function $F_Z$ such that the sum function maps an element $j$ of $Z$ to the same image as the corresponding ur-function $f_{\{j\}}$. The formulation of this principle requires, however, a new nonstandard concept that can be called a `multiple quantifier'. This new concept can be explained as follows. Suppose we have defined the natural numbers $0, 1, 2, \ldots$ as sets, suppose the singletons $\{0\}, \{1\}, \{2\}, \ldots$ exist as well, and suppose we also have the set of natural numbers $\omega = \{0,1,2,\ldots\}$. We can, then, consider variables $f_{\{0\}}, f_{\{1\}}, f_{\{2\}}, \ldots$ ranging over all ur-functions on a singleton of a natural number as indicated by the subscript---so, $f_{\{0\}}$ is a variable that ranges over all ur-functions on the singleton of $0$. In the standard first-order language of ZF we have the possibility to quantify over all ur-functions on a singleton $\{a\}$ by using a quantifier $\forall f_{\{a\}}$, and we have the possibility to use any finite number of such quantifiers in a sentence. E.g. we can have a formula
\begin{equation}\label{eq:1a}
\forall f_{\{0\}}\forall f_{\{1\}} \forall f_{\{2\}}\Psi
\end{equation}
meaning: for all ur-functions on ${\{0\}}$ and for all ur-functions on ${\{1\}}$ and for all ur-functions on ${\{2\}}$, $\Psi$. We can then introduce a new notation by the following postulate of meaning:
\begin{equation}\label{eq:1b}
{(\forall f_{\{j\}})}_{j \in \{0, 1, 2\} } \Psi \Leftrightarrow \forall f_{\{0\}}\forall f_{\{1\}} \forall f_{\{2\}}\Psi
\end{equation}
This new formula can be read as: for any family of ur-functions indexed in $\{0, 1, 2\}$, $\Psi$. So $(\forall f_{\{i\}})_{i \in \{0, 1, 2\} }$ is then a multiple quantifier that, in this case, is equivalent to three quantifiers in standard first-order language. The next step is now that we lift the restriction that a multiple quantifier has to be equivalent to a finite number of standard quantifiers: with that step we enter into nonstandard territory. In the nonstandard language of $\mathfrak{T}$ we can consider a formula like
\begin{equation}\label{eq:1c}
{(\forall f_{\{i\}})}_{i \in \omega } \Psi
\end{equation}
The multiple quantifier is equivalent to a sequence $\forall f_{\{0\}} \forall f_{\{1\}} \forall f_{\{2\}} \cdots$ of infinitely many standard quantifiers in this case. But in the present framework the constant $\omega$ in formula (\ref{eq:1c}) can be replaced by \emph{any} constant, yielding nonstandard multiple quantifiers equivalent to an uncountably infinite number of standard quantifiers.

To yield meaningful theorems, the subformula $\Psi$ of formula (\ref{eq:1c}) has to be open in infinitely many variables $f_{\{0\}}, f_{\{1\}}, f_{\{2\}}, \ldots$, each of which ranges over all ur-functions on a singleton of a natural number. This is achieved by placing a \emph{conjunctive operator} ${\bigwedge}_{i \in \omega}$ in front of a standard first-order formula $\Psi(f_{\{i\}})$ that is open in a composite variable $f_{\{i\}}$, yielding an expression
\begin{equation}\label{eq:2a}
{\bigwedge}_{i \in \omega} \Psi(f_{\{i\}})
\end{equation}
\emph{Syntactically} this is a formula of finite length, but \emph{semantically} it is the conjunction of a countably infinite family of formulas $\Psi(f_{\{0\}}), \Psi(f_{\{1\}}), \Psi(f_{\{2\}}), \ldots$. Together with the multiple quantifier from formula (\ref{eq:1c}) it yields a \emph{sentence} ${(\forall f_{\{i\}})}_{i \in \omega } {\bigwedge}_{i \in \omega} \Psi(f_{\{i\}})$: semantically it contains a bound occurrence of the variables $f_{\{0\}}, f_{\{1\}}, f_{\{2\}}, \ldots$. The nonstandard `sum function axiom' constructed with these formal language elements is so powerful that it allows to derive the infinite schemes SEP and SUB of ZF from just a finite number of axioms.

That said, the literature already contains a plethora of so-called \emph{infinitary logics}; see \cite{Bell} for a short, general overview. In the framework of such an infinitary logic, one typically can form infinitary conjunctions of a collection $\Sigma$ of standard first-order formulas indexed by some set $S$. So, if $\Sigma = \{\phi_j \ | \ j \in S\}$ is such a collection of formulas, then a formula $\bigwedge \Sigma$ stands for $\wedge_{j\in S} \phi_j$, which resembles formula (\ref{eq:2a}). That way, one can, for example, form the conjunction of all the formulas of the SEP scheme of ZF: that yields a finite axiomatization of set theory. However, such an infinitary conjunction cannot be written down explicitly: the string of symbols `$\wedge_{j\in S} \phi_j$' is an informal abbreviation that is not part of the formal language---that is, it is not a well-formed formula. In the present case, however, we are only interested in well-formed formulas of finite length, which can be written down explicitly. This is achieved by building restrictions in the definition of the syntax, so that  a conjunctive operator like ${\bigwedge}_{x \in \omega}$ only forms a well-formed formula if it is put in front of an atomic expression of the type $f_{\{x\}}:x\mapsto t(x)$ where both $f_{\{x\}}$ and $t(x)$ are terms with an occurrence of the same variable $x$ that occurs in the conjunctive operator: that way typographically finite expressions obtain that semantically are infinitary conjunctions. So, the language for our theory $\mathfrak{T}$ is much more narrowly defined than the language of (overly?) general infinitary logics: as it turns out, that suffices for our present aims.

That brings us to the next point, which is to discuss the (possible) practical use of the new finite theory $\mathfrak{T}$  as a foundational theory for mathematics. The practical usefulness of the scheme lies therein that it (i) provides an easy way to construct sets, and (ii) that categories like \textbf{Top}, \textbf{Mon}, \textbf{Grp}, etc, which are subjects of study in category theory, can be viewed as subcategories of the category of sets and functions of Def. \ref{def:universe}, thus providing a new approach to the foundational problem identified in \cite{Cabbolet}. While that latter point (ii) hardly needs elaboration, the former (i) does. The crux is that one doesn't need to apply the nonstandard sum function axiom directly: what one uses is the theorem---or rather: the theorem scheme---that given any set $X$, we can construct a new function on $X$ by giving a function prescription. So, this is a philosophical nuance: in ZF one constructs a new object with the $\in$-relation, but in the present framework one can construct a new function $f_X$ not with the $\in$-relation, but by simply defining which expressions $f_X: y \mapsto z$ are true for the elements $y$ in the domain. On account of the sum function axiom, it is then a guarantee that the function $f_X$ exists. So, given any set $X$ one can simply give a defining function prescription
\begin{equation}
f_X: y \overset{\rm def}{\longmapsto} \imath z \Phi (y, z)
\end{equation}
(where the iota-term $\imath z\Phi (y, z)$ denotes the unique thing $z$ for which the functional relation $\Phi (y, z)$ holds) and it is then a guarantee that $f_X$ exists. And having constructed the function, we have implicitly constructed its graph and its image set. Ergo, giving a function prescription \emph{is} constructing a set. The nonstandard axiom, which may be cumbersome to use directly, stays thus in the background: one uses the main theorem of $\mathfrak{T}$.

What remains to be discussed is that $\mathfrak{T}$, as mentioned in the first paragraph, entails the axioms of ZF and does not have a countable model. As to the first-mentioned property, it will be proven that the infinite axiom schemes SEP and SUP of ZF, translated in the formal language of $\mathfrak{T}$, can be derived from the finitely axiomatized theory $\mathfrak{T}$: that provides an argument for considering $\mathfrak{T}$ to be \emph{not weaker} than ZF. Secondly, even though the language of $\mathfrak{T}$ has only countably many constants, it will be shown that the validity of the nonstandard sum function axiom in a model $\mathcal{M}$ of $\mathfrak{T}$ has the consequence that the downward L\"{o}wenheim-Skolem theorem does not hold: if $\mathfrak{T}$ has a model $\mathcal{M}$, then $\mathcal{M}$ is uncountable. This is a significant result that does not hold in the framework of ZF: it provides, therefore, an argument for considering $\mathfrak{T}$ to be \emph{stronger} than ZF.\\
\ \\
That concludes the introductory discussion. The remainder of this paper is organized as follows. The next section axiomatically introduces this finitely axiomatized nonstandard theory $\mathfrak{T}$. The section thereafter discusses the theory $\mathfrak{T}$ (i) by deriving its main theorem, (ii) by deriving the axiom schemes SEP and SUB of ZF, (iii) by showing that the downward L\"{o}wenheim-Skolem theorem does not hold for the nonstandard theory $\mathfrak{T}$, (iv) by showing the axioms of category theory hold for the class of arrows of the universe of discourse meant in Def. \ref{def:universe}, and (v) by addressing the main concerns for inconsistency. The final section states the conclusions.

\section{Axiomatic introduction}\label{sect:2}

\subsection{Formal language}

First of all a remark. In standard first-order logic, the term `quantifier' refers \emph{both} to the logical symbols `$\forall$' and `$\exists$' \emph{and} to combinations like $\forall x$, $\exists y$ consisting of such a symbol and a variable. While that may be unproblematic in standard logic, here the logical symbols `$\forall$' and `$\exists$' are applied in \emph{different} kinds of quantifiers. Therefore, to avoid confusion we will refer to the symbol `$\forall$' with the term `universal quantification symbol', and to the symbol `$\exists$' with the term `existential quantification symbol'. 

\begin{Definition}\label{def:vocabulary}\rm
The \textbf{vocabulary} of the language $L_\mathfrak{T}$ of $\mathfrak{T}$ consists of the following:
\begin{enumerate}[label=(\roman*)]
\item the constants $\emptyset$ and $\bm{\omega}$, to be interpreted as the empty set and the first infinite ordinal;
\item the constants $1_\emptyset$, to be interpreted as the inactive function;
\item simple variables $x, X, y, Y, \ldots$ ranging over sets;
\item for any constant ${\rm\hat{\mathbf{X}}}$ referring to an individual set, composite symbols $f_{{\rm\hat{\mathbf{X}}}}, g_{{\rm\hat{\mathbf{X}}}}, \ldots$ with an occurrence of the constant ${\rm\hat{\mathbf{X}}}$ as the subscript are simple variables ranging over functions on that set ${\rm\hat{\mathbf{X}}}$;
\item for any simple variable $X$ ranging over sets, composite symbols $f_X, g_X, \dots$ with an occurrence of the variable $X$ as the subscript are composite variables ranging over functions on a set $X$;
\item simple variables $\alpha, \beta, \ldots$ ranging over all things (sets and functions);
\item the binary predicates `$\in$' and `=', and the ternary predicates `$(.):(.)\twoheadrightarrow (.)$' and `$(.):(.)\mapsto (.)$';
\item the logical connectives of first-order logic $\neg, \wedge, \vee, \Rightarrow, \Leftrightarrow$;
\item the universal and existential quantification symbols $\forall, \exists$;
\item the brackets `(' and `)'.
\end{enumerate}
$\Box$
\end{Definition}


\begin{Remark}\rm To distinguish between the theory $\mathfrak{T}$ and its intended model, boldface symbols with a hat like $\mathrm{\hat{\mathbf{X}}}$, $\hat{\bm{\alpha}}$, etc. will be used to denote constants in the vocabulary of the theory $\mathfrak{T}$, while underlined boldface symbols like $\mathrm{\underline{\mathbf{X}}}$, $\underline{\bm{\alpha}}$, etc., will be used to denote individuals in the intended model of $\mathfrak{T}$. \hfill$\Box$
\end{Remark}

\begin{Definition}\label{def:syntax}\rm
The \textbf{syntax} of the language $L_\mathfrak{T}$ is defined by the following clauses:
\begin{enumerate}[label=(\roman*)]
\item if $t$ is a constant, or a simple or a composite variable, then $t$ is a term;
\item if $t_1$ and $t_2$ are terms, then $t_1 = t_2$ and $t_1 \in t_2$ are atomic formulas;
\item if $t_1$, $t_2$, and $t_3$ are terms, then $t_1:t_2\twoheadrightarrow t_3$ and $t_1:t_2\mapsto t_3$ are atomic formulas;
\item if $\Phi$ and $\Psi$ are formulas, then $\neg\Phi$, $(\Phi \wedge \Psi)$, $(\Phi \vee \Psi)$, $(\Phi \Rightarrow \Psi)$, $(\Phi \Leftrightarrow \Psi)$ are formulas;
\item if $\Psi$ is a formula and $t$ a simple variable ranging over sets, over all things, or over functions on a constant set, then $\forall t \Psi$ and $\exists t \Psi$ are formulas;
\item if $X$ and $f_{{\rm\hat{\mathbf{X}}}}$ are simple variables ranging respectively over sets and over functions on the set ${\rm\hat{\mathbf{X}}}$, $f_X$ a composite variable with an occurrence of $X$ as the subscript, and $\Psi$ a formula with an occurrence of a quantifier $\forall f_{{\rm\hat{\mathbf{X}}}}$ or $\exists f_{{\rm\hat{\mathbf{X}}}}$ but with no occurrence of $X$, then $\forall X [X \backslash {\rm\hat{\mathbf{X}}}] \Psi$ and $\exists X [X \backslash {\rm\hat{\mathbf{X}}}] \Psi$ are formulas.
\end{enumerate}
$\Box$
\end{Definition}

\begin{Remark}\rm Regarding clause (vi) of Def. \ref{def:syntax}, $[u \backslash t]\Psi$ is the formula obtained from $\Psi$ by replacing $t$ everywhere by $u$. This definition will be used throughout this paper. Note that if $\Psi$ is a formula with an occurrence of the simple variable $f_{{\rm\hat{\mathbf{X}}}}$, then $[X\backslash {{\rm\hat{\mathbf{X}}}}]\Psi$ is a formula with an occurrence of the composite variable $f_X$. \hfill$\Box$
\end{Remark}
\begin{Definition}\label{def:syntax-elements}\rm
The language $L_\mathfrak{T}$ contains the following special language elements:
\begin{enumerate}[label=(\roman*)]
\item if $t$ is a simple variable ranging over sets, over all things, or over functions on a constant set, then $\forall t$ and $\exists t$ are \textbf{quantifiers with a simple variable};
\item if $f_X$ is a composite variable, then $\forall f_X$ and $\exists f_X$ are a \textbf{quantifiers with a composite variable}.
\end{enumerate}
A sequence like $\forall X\forall f_X$ can be called a \textbf{double quantifier}.\hfill $\Box$
\end{Definition}

\noindent The scope of a quantifier is defined as usual: note that a quantifier with a composite variable can only occur in the scope of a quantifier with a simple variable. A free occurrence and a bounded occurrence of a simple variable is also defined as usual; these notions can be simply defined for formulas with a composite variable.

\begin{Definition}\label{def:double-quantifier}\rm
Let $f_X$ be a composite variable with an occurrence of the simple variable $X$; then
\begin{enumerate}[label=(\roman*)]
\item an occurrence of $f_X$ in a formula $\Psi$ is \textbf{free} if that occurrence is neither in the scope of a quantifier with the composite variable $f_X$ nor in the scope of a quantifier with the simple variable $X$;
\item an occurrence of $f_X$ in a formula $\Psi$ is \textbf{bounded} if that occurrence is in the scope of a quantifier with the composite variable $f_X$.
\end{enumerate}
A \textbf{sentence} is a formula with no free variables---simple or composite. A formula is \textbf{open} in a variable if there is a free occurrence of that variable. A formula that is open in a composite variable $f_X$ is also open in the simple variable $X$. \hfill$\Box$
\end{Definition}

\begin{Definition}\label{def:classical-semantics}\rm
The \textbf{semantics} of any sentence without a quantifier with a composite variable is as usual. Furthermore,
\begin{enumerate}[label=(\roman*)]
\item a sentence $\forall X \Psi$ with an occurrence of a quantifier $\forall f_X$ or $\exists f_X$ with the composite variable $f_X$ is valid in a model $\mathcal{M}$ if and only if for every assignment $g$ that assigns an individual set $g(X) = {\rm\underline{\mathbf{X}}}$ in $\mathcal{M}$ as a value to the variable $X$, the sentence $[{\rm\underline{\mathbf{X}}}\backslash X]\Psi$ is valid in $\mathcal{M}$;
\item the sentence $[{\rm\underline{\mathbf{X}}}\backslash X]\Psi$ obtained in clause (i) is a sentence without a quantifier with a composite variable, hence with usual semantics.
\end{enumerate}
The semantics of a sentence $\exists X \Psi$ with an occurrence of a quantifier $\forall f_X$ or $\exists f_X$ is left as an exercise.\hfill $\Box$
\end{Definition}

\noindent After the introduction of the `standard' first-order axioms of $\mathfrak{T}$, the language $L_\mathfrak{T}$ will be extended to enable the formulation of the desired nonstandard formulas.

\subsection{Set-theoretical axioms}
Below the set-theoretical axioms are listed; these are all standard first-order formulas. Due to the simplicity of the axioms, comments are kept to a bare minimum.

\begin{Axiom}\label{ax:EXT}\rm Extensionality Axiom for Sets (EXT): two sets $X$ and $Y$ are identical if they have the same things (sets and functions) as elements.
\begin{equation}\label{eq:}
\forall X \forall Y (X = Y \Leftrightarrow \forall \alpha (\alpha \in X \Leftrightarrow \alpha \in Y))
\end{equation}
$\Box$
\end{Axiom}

\begin{Axiom}\label{ax:FnotS}\rm For any set $X$, any function $f_X$ is not identical to any set $Y$:
\begin{equation}
\forall X \forall f_X \forall Y (f_X \neq Y)
\end{equation}
$\Box$
\end{Axiom}

\begin{Axiom}\label{ax:SnoDom}\rm A set $X$ has no domain or codomain, nor does it map any thing to an image:
\begin{equation}
\forall X \forall\alpha\forall\beta(X:\alpha\not\twoheadrightarrow\beta \wedge X:\alpha\not\mapsto\beta)
\end{equation}
$\Box$
\end{Axiom}

\noindent These latter two axioms establish that sets are different from functions on a set, and do not have the properties of functions on a set.

\begin{Axiom}\label{ax:EMPTY}\rm Empty Set Axiom (EMPTY): there exists a set $X$, designated by the constant $\emptyset$, that has no elements.
\begin{equation}\label{eq:}
\exists X(X = \emptyset \wedge \forall\alpha(\alpha \not \in X))
\end{equation}
$\Box$
\end{Axiom}

\begin{Axiom}\label{ax:PAIR}\rm Axiom of Pairing (PAIR): for every thing $\alpha$ and every thing $\beta$ there exists a set $X$ that has precisely the things $\alpha$ and $\beta$ as its elements.
\begin{equation}\label{eq:}
\forall \alpha \forall \beta \exists X \forall \gamma (\gamma\in X \Leftrightarrow \gamma = \alpha \vee \gamma = \beta)
\end{equation}
$\Box$
\end{Axiom}

\begin{Remark}\rm Using set-builder notation, the empty set can be interpreted as the individual $\{\}$ in the intended model. Furthermore, given individual things $\underline{\bm{\alpha}}$ and $\underline{\bm{\beta}}$ in the universe of the intended model, the pair set of $\underline{\bm{\alpha}}$ and $\underline{\bm{\beta}}$ can then be identified with the individual $\{\underline{\bm{\alpha}}, \underline{\bm{\beta}}\}$. Note that this is a singleton if $\underline{\bm{\alpha}}=\underline{\bm{\beta}}$. \hfill$\Box$
\end{Remark}

\begin{Definition}\label{def:singleton}\rm (Extension of vocabulary of $L_\mathfrak{T}$.)\\ If $t$ is a term, then $(t)^+$ is a term, to be called ``the singleton of $t$''---which may be written as $t^+$ if no confusion arises. In particular, if $\alpha$ is a variable ranging over all things and $x$ a variable ranging over all sets, then $\alpha^+$ is a variable ranging over all singletons and $x^+$ a variable ranging over all singletons of sets.
We thus have
\begin{equation}
  \forall\alpha\forall\beta(\beta = \alpha^+ \Leftrightarrow \exists X(\beta = X \wedge \forall\gamma(\gamma\in X \Leftrightarrow\gamma = \alpha)))
\end{equation}
Likewise for $x^+$. \hfill$\Box$
\end{Definition}

\begin{Notation}\label{not:ZermeloOrdinals}\rm
On account of Def. \ref{def:singleton}, the constants $\emptyset^{+}, \emptyset^{++}, \emptyset^{+++}, \ldots$ are contained in the language $L_{\mathfrak{T}}$ . Therefore, we can introduce the (finite) \textbf{Zermelo ordinals} at this point as a notation for these singletons:
\begin{equation}
\left\{  \begin{array}{l}
           0:= \emptyset  \\
           1:= \emptyset^{+} \\
           2:= \emptyset^{++}\\
           \vdots
         \end{array}
\right.
\end{equation}
According to the literature, the idea stems from unpublished work by Zermelo in 1916 \cite{Levy}.\hfill$\Box$
\end{Notation}
\begin{Axiom}\label{ax:SUM}\rm Sum Set Axiom (SUM): for every set $X$ there exists a set $Y$ made up of the elements of the elements of $X$.
\begin{equation}\label{eq:}
\forall X \exists Y \forall \alpha(\alpha \in Y \Leftrightarrow \exists Z(Z \in X \wedge \alpha \in Z))
\end{equation}
$\Box$
\end{Axiom}

\begin{Remark}\rm Given an individual set ${\rm\underline{\mathbf{X}}}$ in the universe of the intended model, the sum set of ${\rm\underline{\mathbf{X}}}$ can be denoted by the symbol $\bigcup {\rm\underline{\mathbf{X}}}$ and, using set-builder notation, be identified with the individual $\{\alpha \ | \ \exists Z \in {\rm\underline{\mathbf{X}}}(\alpha\in Z)\}$. \hfill$\Box$
\end{Remark}

\begin{Axiom}\label{ax:POW}\rm Powerset Axiom (POW): for every set $X$ there is a set $Y$ made up of the subsets of $X$.
\begin{equation}\label{eq:}
\forall X \exists Y \forall \alpha (\alpha \in Y \Leftrightarrow \exists Z(Z \subset X \wedge \alpha = Z))
\end{equation}
$\Box$
\end{Axiom}

\begin{Remark}\rm Given an individual set ${\rm\underline{\mathbf{X}}}$ in the universe of the intended model, the powerset of ${\rm\underline{\mathbf{X}}}$ can be denoted by the symbol $\mathcal{P}({\rm\underline{\mathbf{X}}})$ and, using set-builder notation, be identified with the individual $\{x \ | \ x \subset{\rm\underline{\mathbf{X}}}\}$. \hfill$\Box$
\end{Remark}

\begin{Axiom}\label{ax:INF}\rm  Infinite Ordinal Axiom (INF): the infinite ordinal $\bm{\omega}$ is the set of all finite Zermelo ordinals.
\begin{equation}\label{eq:}
0 \in \bm{\omega} \wedge \forall \alpha (\alpha\in \bm{\omega} \Rightarrow \alpha^+ \in \bm{\omega}) \wedge \forall \beta \in \bm{\omega} (\not\exists\gamma \in \bm{\omega}(\beta= \gamma^+)\Leftrightarrow\beta=\emptyset))
\end{equation}
$\Box$
\end{Axiom}

\begin{Remark}\label{rem:omega}\rm The set $\bm{\omega}$ in INF is uniquely determined. In the intended model, the set $\bm{\omega}$ can be denoted by the symbol $\mathbb{N}$ and, using set-builder notation, be identified with the individual $\mathbb{N} := \{\ \{\}, \{\{\}\}, \{\{\{\}\}\}, \ldots \}$. \hfill$\Box$
\end{Remark}

\begin{Axiom}\label{ax:REG}\rm Axiom of Regularity (REG): every nonempty set $X$ contains an element $\alpha$ that has no elements in common with $X$.
\begin{equation}\label{eq:}
\forall X \neq \emptyset \exists \alpha (\alpha \in X \wedge \forall \beta (\beta \in \alpha \Rightarrow \beta \not \in X))
\end{equation}
$\Box$
\end{Axiom}

\begin{Definition}\label{def:TwoTuple}\rm For any things $\alpha$ and $\beta$, the \textbf{two-tuple} $\langle\alpha, \beta\rangle$ is the pair set of $\alpha$ and the pair set of $\alpha$ and $\beta$; using the iota-operator we get
\begin{equation}
\langle\alpha, \beta\rangle:= \imath x(\forall\gamma(\gamma \in x \Leftrightarrow \gamma = \alpha \vee \exists Z(\gamma = Z \wedge \forall \eta(\eta \in Z \Leftrightarrow \eta = \alpha \vee \eta = \beta)))
\end{equation}
$\Box$
\end{Definition}

\noindent A simple corollary of Def. \ref{def:TwoTuple} is that for any things $\alpha$ and $\beta$, the two-tuple $\langle\alpha, \beta\rangle$ always exists. There is, thus, no danger of nonsensical terms involved in the use of the iota-operator in Def. \ref{def:TwoTuple}.

\begin{Remark}\rm Given individual things $\underline{\bm{\alpha}}$ and $\underline{\bm{\beta}}$ in the universe of the intended model, the two-tuple $\langle \underline{\bm{\alpha}}, \underline{\bm{\beta}}\rangle$ can, using set-builder notation, be identified with the individual $\{\underline{\bm{\alpha}}, \{\underline{\bm{\alpha}},\underline{\bm{\beta}}\}\}$.\hfill$\Box$
\end{Remark}

\noindent In principle, these set-theoretical axioms suffice: the function-theoretical axioms in the next section provide other means for the construction of sets.

\subsection{Standard function-theoretical axioms}\label{sect:functions}

\begin{Axiom}\label{ax:EPS}\rm A function $f_X$ on a set $X$ has no elements:
\begin{equation}
\forall X \forall f_X \forall \alpha (\alpha \not \in f_X)
\end{equation}
$\Box$
\end{Axiom}

\begin{Remark}\rm One might think that Ax. \ref{ax:EPS} destroys the uniqueness of the empty set. But that is not true. It is true that a function on a set $X$ and the empty set share the property that they have no elements, but the empty set is the only \textbf{set} that has this property: Ax. \ref{ax:FnotS} guarantees, namely, that a function on a set $X$ is not a set!\hfill$\Box$
\end{Remark}

\begin{Axiom}\label{ax:GEN-F}\rm General Function-Theoretical Axiom (GEN-F): for any nonempty set $X$, any function $f_X$ has a set $Y$ as domain and a set $Z$ as codomain, and maps every element $\alpha$ in $Y$ to a unique image $\beta$:
\begin{equation}
\forall X \forall f_X (X\neq \emptyset \Rightarrow \exists Y\exists Z(f_X: Y \twoheadrightarrow Z \wedge \forall \alpha\in Y \exists! \beta (f_X: \alpha \mapsto \beta)))
\end{equation}
$\Box$
\end{Axiom}

\begin{Axiom}\label{ax:NEGD}\rm For any set $X$, any function $f_X$ has no other domain than $X$:\\
\begin{equation}
\forall X \forall f_X \forall \alpha (\alpha\neq X \Rightarrow \forall \xi (f_X: \alpha \not \twoheadrightarrow \xi))
\end{equation}
$\Box$
\end{Axiom}

\begin{Axiom}\label{ax:NONA}\rm For any set $X$, any function $f_X$ does not take a thing outside $X$ as argument:
\begin{equation}
\forall X \forall f_X\forall \alpha \not\in X \forall \beta(f_X: \alpha \not\mapsto \beta)
\end{equation}
$\Box$
\end{Axiom}

\begin{Remark}\rm Ax. \ref{ax:GEN-F} dictates that precisely one expression $f_X: \alpha \mapsto \beta$ is true for each $\alpha \in X$. This doesn't a priori exclude that such an expression can also be true for another thing $\alpha$ not in $X$. But by Ax. \ref{ax:NONA} this is excluded.\hfill$\Box$
\end{Remark}

\begin{Axiom}\label{ax:NEGC}\rm For any nonempty set $X$ and any function $f_X$, the image set is the only codomain:
\begin{equation}
\forall X \forall f_X (X\neq \emptyset \Rightarrow \forall \beta(f_X:X \twoheadrightarrow \beta \Rightarrow \exists Z(\beta = Z \wedge \forall \gamma(\gamma \in Z \Leftrightarrow \exists \eta\in X(f_X: \eta \mapsto \gamma)))))
\end{equation}
(This is the justification for the use of the two-headed arrow `$\twoheadrightarrow$', commonly used for surjections.) \hfill$\Box$
\end{Axiom}

\begin{Remark}\rm
Note that these first function-theoretical axioms already provide a tool to construct a set: if we can construct a new function $f_X$ on a set $X$ from existing functions (an axiom will be given further below), then these axioms guarantee the existence of a unique codomain made up of all the images of the elements of $X$ under $f_X$. \hfill$\Box$
\end{Remark}

\begin{Remark}\rm
Given an individual set ${\rm\underline{\mathbf{X}}}$ and an individual function ${\rm\underline{\mathbf{f}}}_{\rm\underline{\mathbf{X}}}$ in the universe of the intended model, this unique codomain can be denoted by a symbol ${\rm\underline{\mathbf{f}}}_{\rm\underline{\mathbf{X}}}[{\rm\underline{\mathbf{X}}}]$ or ${\rm cod}({\rm\underline{\mathbf{f}}}_{\rm\underline{\mathbf{X}}})$, and, using set-builder notation, be identified with the individual $\{\beta \ | \ \exists\alpha\in{\rm\underline{\mathbf{X}}}({\rm\underline{\mathbf{f}}}_{\rm\underline{\mathbf{X}}}:\alpha\mapsto\beta)\}$ in the universe of the intended model. Furthermore, given a thing $\underline{\bm{\alpha}}$ in ${\rm\underline{\mathbf{X}}}$, its unique image under ${\rm\underline{\mathbf{f}}}_{\rm\underline{\mathbf{X}}}$ can be denoted by the symbol ${\rm\underline{\mathbf{f}}}_{\rm\underline{\mathbf{X}}}(\underline{\bm{\alpha}})$.\hfill$\Box$
\end{Remark}

\begin{Notation}\rm At this point we can introduce expressions $f_X:X\rightarrow Y$, to be read as ``the function f-on-X is a function from the set $X$ to the set $Y$'', by the postulate of meaning
\begin{equation}
f_X:X\rightarrow Y \Leftrightarrow \exists Z(f_X :X \twoheadrightarrow Z \wedge Z\subset Y)
\end{equation}
This provides a connection to existing mathematical practices. \hfill$\Box$
\end{Notation}

\begin{Axiom}\label{ax:INV}\rm Inverse Image Set Axiom (INV): for any nonempty set $X$ and any function $f_X$ with domain $X$ and any co-domain $Y$, there is for any thing $\alpha$ a set $Z\subset X$ that contains precisely the elements of $X$ that are mapped to $\alpha$ by $f_X$:
\begin{equation}
\forall X \neq \emptyset \forall f_X\forall Y(f_X: X\twoheadrightarrow Y\Rightarrow  \forall \beta\exists Z\forall \alpha(\alpha \in Z \Leftrightarrow \alpha \in X \wedge f_X: \alpha \mapsto \beta))
\end{equation}
$\Box$
\end{Axiom}

\begin{Remark}\rm
Note that INV, in addition to GEN-F, also provides a tool to construct a set: if we have constructed a new function $f_X$ with domain $X$ from existing functions, then with this axiom guarantees that the inverse image set exists of any thing $\beta$ in the codomain of $f_X$. \hfill$\Box$
\end{Remark}

\begin{Remark}\rm
Given an individual set ${\rm\underline{\mathbf{X}}}$, an individual function ${\rm\underline{\mathbf{f}}}_{\rm\underline{\mathbf{X}}}$, and an individual thing $\underline{\bm{\beta}}$ in the universe of the intended model,
the unique inverse image set can be denoted by the symbol ${\rm\underline{\mathbf{f}}}_{\rm\underline{\mathbf{X}}}^{-1}(\underline{\bm{\beta}})$ and can, using set-builder notation, be identified with the individual $\{\alpha \ | \ \alpha\in{\rm\underline{\mathbf{X}}}\wedge{\rm\underline{\mathbf{f}}}_{\rm\underline{\mathbf{X}}}:\alpha\mapsto\underline{\bm{\beta}} \}$ in the universe of the intended model.\hfill$\Box$
\end{Remark}

\begin{Axiom}\label{ax:EXTF}\rm Extensionality Axiom for Functions (EXT-F): for any set $X$ and any function $f_X$, and for any set $Y$ and any function $g_Y$, the function $f_X$ and the function $g_Y$ are identical if and only if their domains are identical and their images are identical for every argument:
\begin{equation}
\forall X \forall f_X \forall Y \forall g_Y(f_X = g_Y \Leftrightarrow X = Y \wedge \forall \alpha\forall\beta (f_X:\alpha\mapsto\beta \Leftrightarrow g_Y:\alpha\mapsto\beta))
\end{equation}
$\Box$
\end{Axiom}

\begin{Axiom}\label{ax:emptyfunction}\rm Inactive Function Axiom (IN-F): there exists a function $f_\emptyset$, denoted by the constant $1_\emptyset$, which has the empty set as domain and codomain, and which doesn't map any argument to any image:
\begin{equation}
\exists f_\emptyset (f_\emptyset = 1_\emptyset \wedge f_\emptyset: \emptyset \twoheadrightarrow \emptyset \wedge \forall \alpha \forall \beta (f_\emptyset: \alpha \not \mapsto \beta))
\end{equation}
$\Box$
\end{Axiom}

\noindent Note that there can be no other functions on the empty set than the inactive function $1_\emptyset$, since the image set is always empty: the atomic expression $f_\emptyset:\emptyset \twoheadrightarrow A$ cannot be true for any nonempty set $A$.

\begin{Axiom}\label{ax:UFA}\rm Ur-Function Axiom (UFA): for any things $\alpha$ and $\beta$ there exists an ur-function $f_{\alpha^+}$ with domain $\alpha^+$ and codomain $\beta^+$ that maps $\alpha$ to $\beta$:\\
\begin{equation}
\forall \alpha \forall \beta \exists f_{\alpha^+} (f_{\alpha^+}: \alpha^+ \twoheadrightarrow \beta^+ \wedge f_{\alpha^+}: \alpha \mapsto \beta)
\end{equation}
$\Box$
\end{Axiom}

\begin{Remark}\rm
Given individual things $\underline{\bm{\alpha}}$ and $\underline{\bm{\beta}}$ in the universe of the intended model, the ur-function on $\{\underline{\bm{\alpha}}\}$ that maps $\underline{\bm{\alpha}}$ to $\underline{\bm{\beta}}$ can, using set-builder notation, be identified with the individual $\{\langle \underline{\bm{\alpha}}, \underline{\bm{\beta}}\rangle\}_{\{\underline{\bm{\alpha}}\}}$ in the universe of the intended model. Note that the graph of the ur-function is guaranteed to exist. \hfill$\Box$
\end{Remark}

\begin{Axiom}\label{ax:REGF}\rm Axiom of Regularity for Functions (REG-F): for any set $X$ and any function $f_X$ with any codomain $Y$, $f_X$ does not take itself as argument or has itself as image:\\
\begin{equation}
\forall X \forall f_X\forall Y(f_X: X\twoheadrightarrow Y\Rightarrow \forall \alpha(f_X: f_X \not\mapsto \alpha \wedge f_X:\alpha \not\mapsto f_X))
\end{equation}
$\Box$
\end{Axiom}

\begin{Remark}\rm As to the first part, Wittgenstein already mentioned that a function cannot have itself as argument \cite{Wittgenstein}. The second part is to exclude the existence of pathological `Siamese twin functions', e.g. the ur-function $f_X$ and $g_Y$ given, using set-builder notation, by
\begin{gather}
f_{X}: \{h_{Y}\} \twoheadrightarrow \{f_{X}\}\ , \ f_X: h_Y \mapsto f_X \\
h_{Y}: \{f_{X}\} \twoheadrightarrow \{h_{Y}\}\ , \ g_Y: f_X \mapsto g_Y
\end{gather}
We thus have ${\rm dom}(f_X) = X = \{h_{Y}\}$ and ${\rm dom}(h_Y) = Y = \{f_{X}\}$; if one tries to substitute that in the above Eqs., then one gets `infinite towers'. These may not be constructible from the axioms, but they could exist a priori in the category of sets and functions: to avoid that we practice mathematical eugenics and prevent them from occurring with REG-F. See Fig. \ref{fig:VD} for an illustration. The name `Siamese twin functions' is derived from the name `Siamese twin sets' for sets $A$ and $B$ satisfying $A\in B \wedge B\in A$, as published in \cite{Muller}.\hfill$\Box$
\end{Remark}

\begin{SCfigure}[1.8][h!]
\includegraphics[width=0.45\textwidth]{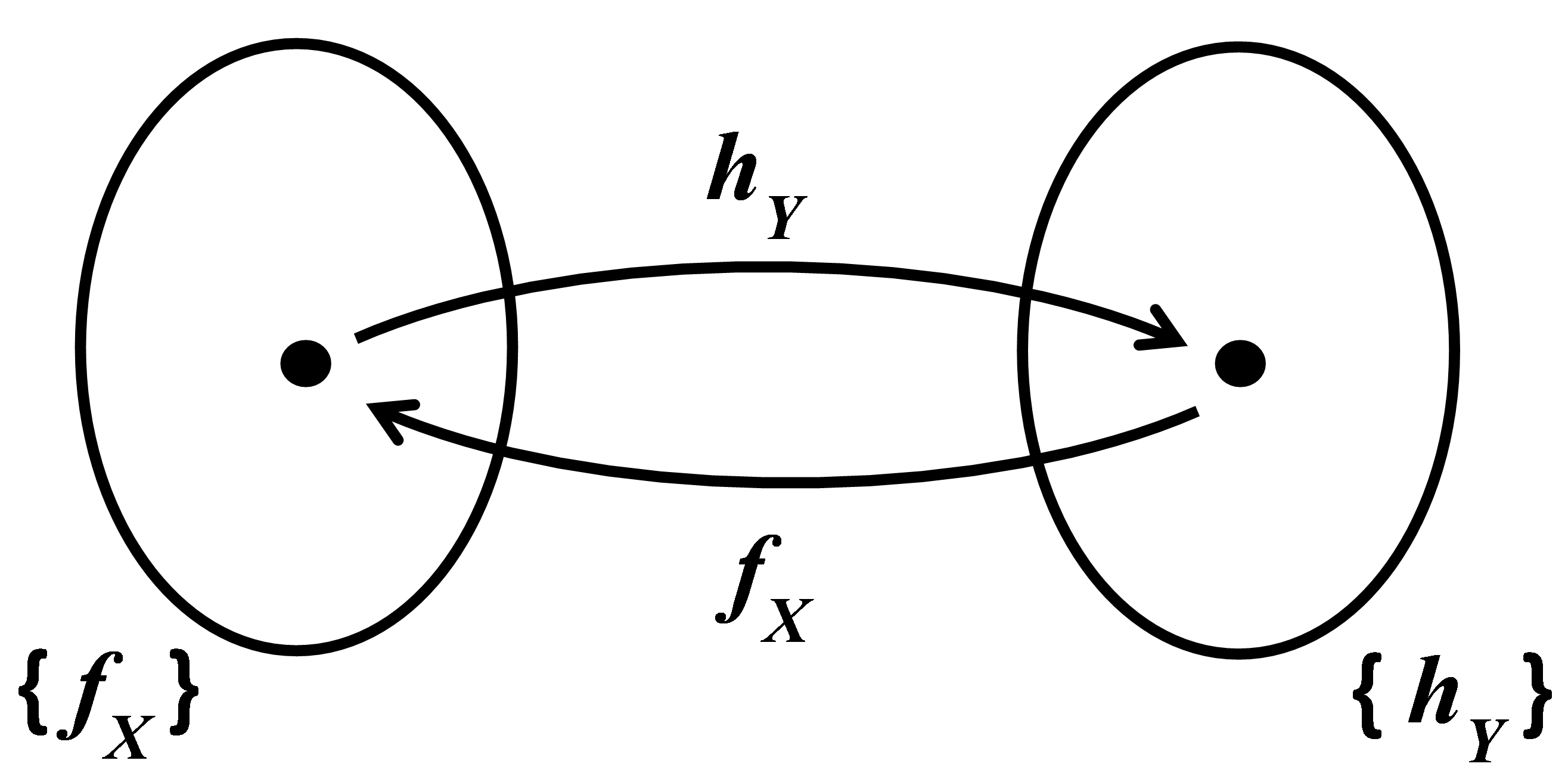}
\caption{Venn diagram of the Siamese twin functions $f_X$ and $h_Y$. The left oval together with the black point inside it is a Venn diagram representing the singleton $\{f_X\}$; the right oval together with the black point inside it is a Venn diagram representing the singleton $\{h_y\}$. The upper arrow represents the mapping of $f_X$ to $h_Y$ by $h_Y$, the lower arrow the mapping of $h_Y$ to $f_X$ by $f_X$.}
\label{fig:VD}
\end{SCfigure}

\subsection{The nonstandard function-theoretical axiom and inference rules}

\begin{Definition}\label{def:extra-vocabulary}\rm
The \textbf{vocabulary} of $L_\mathfrak{T}$ as given by Def. \ref{def:vocabulary} is extended
\begin{enumerate}[label=(\roman*)]
  \item with symbols `$\imath$', the \emph{iota-operator}, and `$\bigwedge$', the \emph{conjunctor};
  \item for any constant ${\rm\hat{\mathbf{X}}}$ denoting a set, with enough composite symbols ${\rm \hat{\mathbf{f}}}_{\alpha^+}$, ${\rm\hat{\mathbf{h}}}_{\beta^+}$, ... such that each of these is a variable that ranges over a family of ur-functions indexed in ${\rm\hat{\mathbf{X}}}$.
\end{enumerate}
The \textbf{syntax} of $L_\mathfrak{T}$ as given by Def. \ref{def:syntax} is extended with the following clauses:
\begin{enumerate}[label=(\roman*),resume]
\item if ${\rm \hat{\mathbf{f}}}_{\alpha^+}$ is a variable as in clause (ii) of Def. \ref{def:extra-vocabulary}, then ${\rm \hat{\mathbf{f}}}_{\alpha^+}$ is a term;
\item if $t$ is a term and $u_{t^+}$ is a composite term with an occurrence of $t$, and $\beta$ is a variable ranging over all things, then $\imath \beta(u_{t^+}:t\mapsto\beta)$ is a iota-term denoting the image of $t$ under the ur-function $u_{t^+}$;
\item if ${\rm\hat{\mathbf{X}}}$ is a constant designating a set, $\alpha$ a simple variable ranging over all things, and $\Psi(\alpha)$ an atomic formula of the type $t:t^\prime\mapsto t^{\prime\prime}$ that is open in $\alpha$, then $\bigwedge_{\alpha\in {\rm\hat{\mathbf{X}}}} \Psi(\alpha)$ is a formula; 
\item if $\Phi$ is a formula with a subformula $\bigwedge_{\alpha\in {\rm\hat{\mathbf{X}}}} \Psi(\alpha)$ as in (iii) with an occurrence of a composite variable $f_{\alpha^+}$, then ${(\forall f_{\alpha^+})}_{\alpha\in {\rm\hat{\mathbf{X}}}}\Phi$ and ${(\exists f_{\alpha^+})}_{\alpha\in {\rm\hat{\mathbf{X}}}}\Phi$ are formulas;
\item if $X$ is a simple variable ranging over sets, and $\Upsilon$ a formula with no occurrence of $X$ but with a subformula ${(\forall f_{\alpha^+})}_{\alpha\in {\rm\hat{\mathbf{X}}}}\Phi$ as in (iv), then $\forall X [X\backslash{\rm\hat{\mathbf{X}}}]\Upsilon$ and $\exists X [X\backslash{\rm\hat{\mathbf{X}}}]\Upsilon$ are formulas with a subformula ${(\forall f_{\alpha^+})}_{\alpha\in X}[X\backslash{\rm\hat{\mathbf{X}}}]\Phi$;
\item if $X$ is a simple variable ranging over sets, and $\Upsilon$ a formula with no occurrence of $X$ but with a subformula ${(\exists f_{\alpha^+})}_{\alpha\in {\rm\hat{\mathbf{X}}}}\Phi$ as in (iv), then $\forall X [X\backslash{\rm\hat{\mathbf{X}}}]\Upsilon$ and $\exists X [X\backslash{\rm\hat{\mathbf{X}}}]\Upsilon$ are formulas with a subformula ${(\exists f_{\alpha^+})}_{\alpha\in X}[X\backslash{\rm\hat{\mathbf{X}}}]\Phi$.
\end{enumerate}
$\Box$
\end{Definition}
\newpage
\begin{Remark}\rm Concerning the iota-operator in clause (iv) of Def. \ref{def:extra-vocabulary}, we thus have
\begin{equation}
  \forall\alpha\forall f_{\alpha^+}\forall \gamma (\gamma = \imath\beta(f_{\alpha^+}:\alpha\mapsto\beta)\Leftrightarrow f_{\alpha^+}:\alpha\mapsto \gamma)
\end{equation}
Note that upon assigning constant values to $\alpha$ and $f_{\alpha^+}$, the term $\imath\beta(f_{\alpha^+}:\alpha\mapsto\beta)$ always refers to an existing, unique thing: there is thus no danger of nonsensical terms involved in this use of the iota-operator.\hfill$\Box$
\end{Remark}
\begin{Definition}\label{def:extra-syntax-elements}\rm
The following special language elements are added:
\begin{enumerate}[label=(\roman*)]
\item if ${\rm\hat{\mathbf{X}}}$ is a constant designating a set, $X$ and $\alpha$ simple variables ranging over sets c.q. things, and $f_{\alpha^+}$ a composite variable ranging over ur-functions on $\alpha^+$, then
    \begin{itemize}
      \item ${(\forall f_{\alpha^+})}_{\alpha\in {\rm\hat{\mathbf{X}}}}$ is a \textbf{multiple universal quantifier};
      \item ${(\exists f_{\alpha^+})}_{\alpha\in {\rm\hat{\mathbf{X}}}}$ is a \textbf{multiple existential quantifier};
      \item ${(\forall f_{\alpha^+})}_{\alpha\in X}$ in the scope of a quantifier $\forall X$ is a \textbf{universally generalized multiple universal quantifier};
      \item ${(\forall f_{\alpha^+})}_{\alpha\in X}$ in the scope of a quantifier $\exists X$ is an \textbf{existentially generalized multiple universal quantifier};
      \item ${(\exists f_{\alpha^+})}_{\alpha\in X}$ in the scope of a quantifier $\forall X$ is a \textbf{universally generalized multiple existential quantifier};
      \item ${(\exists f_{\alpha^+})}_{\alpha\in X}$ in the scope of a quantifier $\exists X$ is an \textbf{existentially generalized multiple existential quantifier};
    \end{itemize}
\item if ${\rm\hat{\mathbf{X}}}$ is a constant designating a set, $X$ a simple variable ranging over sets, and $\alpha$ a simple variable ranging over all things, then
    \begin{itemize}
      \item $\bigwedge_{\alpha\in {\rm\hat{\mathbf{X}}}}$ is a \textbf{conjunctive operator with constant range};
      \item $\bigwedge_{\alpha\in X}$ a \textbf{conjunctive operator with variable range}.
      \end{itemize}
\end{enumerate}
$\Box$
\end{Definition}

\noindent Concerning the language elements in clause (i), if $\rotatechar{K}$ and $\rotatechar{K}^\prime$ are existential or universal quantification symbols, then we can generally say that ${(\rotatechar{K} f_{\alpha^+})}_{\alpha\in {\rm\hat{\mathbf{X}}}}$ is a \textbf{multiple quantifier}, and that ${(\rotatechar{K} f_{\alpha^+})}_{\alpha\in X}$ in the scope of a quantifier $\rotatechar{K}^\prime X$ is a \textbf{generalized multiple quantifier}.

\begin{Definition}\label{def:scope1}\rm If ${\rm\hat{\mathbf{X}}}$ is a constant designating a set and $\bigwedge_{\alpha\in {\rm\hat{\mathbf{X}}}}\Psi$ is a subformula of a formula $\Phi$, then $\Psi$ is the \textbf{scope of the conjunctive operator}; furthermore,
\begin{enumerate}[label=(\roman*)]
\item if there is an occurrence of a variable $\alpha$ and/or ${\rm \hat{\mathbf{f}}}_{\alpha^+}$ in the scope of the conjunctive operator, then a formula $\bigwedge_{\alpha\in {\rm\hat{\mathbf{X}}}}\Psi$ has a \textbf{semantic occurrence} of each of the constants ${\rm\bm{\hat{\alpha}}}$ over which the variable $\alpha$ ranges, and/or of each of the constant ur-functions ${\rm\hat{\mathbf{u}}}_{{\rm\bm{\hat{\alpha}}}^+}$ over which the variable ${\rm \hat{\mathbf{f}}}_{\alpha^+}$ ranges---a subformula $\bigwedge_{\alpha\in {\rm\hat{\mathbf{X}}}}\Psi$ has thus to be viewed as the conjunction of all the formulas $[{{\rm\bm{\hat{\alpha}}}\backslash\alpha][{\rm\hat{\mathbf{u}}}_{{\rm\bm{\hat{\alpha}}}^+}\backslash {\rm \hat{\mathbf{f}}}}_{\alpha^+}]\Psi$ with ${\rm\hat{\bm{\alpha}}}\in {\rm\hat{\mathbf{X}}}$.
\item if there is an occurrence of a composite variable $f_{\alpha^+}$ in the scope of the conjunctive operator, then the subformula $\bigwedge_{\alpha\in {\rm\hat{\mathbf{X}}}}\Psi(f_{\alpha^+})$ has a \textbf{free semantic occurrence} of each of the simple variables $f_{{\rm\bm{\hat{\alpha}}}^+}$ ranging over ur-functions on the singleton of ${\rm\bm{\hat{\alpha}}}$ with ${\rm\bm{\hat{\alpha}}}\in {\rm\hat{\mathbf{X}}}$---the formula $\bigwedge_{\alpha\in {\rm\hat{\mathbf{X}}}}\Psi(f_{\alpha^+})$ has thus to be viewed as the conjunction of all the formulas $[{\rm\bm{\hat{\alpha}}}\backslash\alpha][f_{{\rm\bm{\hat{\alpha}}}^+}\backslash f_{\alpha^+}]\Psi$.
\end{enumerate}
$\Box$
\end{Definition}

\begin{Definition}\label{def:scope2}\rm If ${\rm\hat{\mathbf{X}}}$ is a constant designating a set, $\rotatechar{K}$ an existential or universal quantification symbol, and ${(\rotatechar{K} f_{\alpha^+})}_{\alpha\in {\rm\hat{\mathbf{X}}}} \Psi$ a subformula of a formula $\Phi$, then $\Psi$ is the \textbf{scope of the multiple quantifier}; likewise for the scope of the generalized multiple quantifiers of Def. \ref{def:extra-syntax-elements}. If a formula $\Psi$ has a free semantic occurrence of each of the simple variables $f_{{\rm\bm{\hat{\alpha}}}^+}$ with a constant ${\rm\bm{\hat{\alpha}}}\in {\rm\hat{\mathbf{X}}}$, then a formula ${(\rotatechar{K} f_{\alpha^+})}_{\alpha\in {\rm\hat{\mathbf{X}}}} \Psi$ has a \textbf{bounded semantic occurrence} of each of the simple variables $f_{{\rm\bm{\hat{\alpha}}}^+}$ with a constant ${\rm\bm{\hat{\alpha}}}\in {\rm\hat{\mathbf{X}}}$. A nonstandard formula $\Psi$ without free occurrences of variables is a \textbf{sentence}. If $X$ is a simple variable ranging over sets and $\Psi$ is a sentence with an occurrence of a multiple quantifier ${(\rotatechar{K} f_{\alpha^+})}_{\alpha\in {\rm\hat{\mathbf{X}}}}$ and with no occurrence of $X$, then $\forall X [X\backslash{\rm\hat{\mathbf{X}}}]\Psi$ and $\exists X [X\backslash{\rm\hat{\mathbf{X}}}]\Psi$ are sentences with an occurrence of a generalized multiple quantifier ${(\rotatechar{K} f_{\alpha^+})}_{\alpha\in X}$. \hfill$\Box$
\end{Definition}

\begin{Axiom}\label{ax:SUMF}\rm Sum Function Axiom (SUM-F): for any nonempty set $X$ and for any family of ur-functions $f_{\alpha^+}$ indexed in $X$, there is a sum function $F_X$ with some codomain $Y$ such that the conjunction of all mappings by $F_X$ of $\alpha$ to its image under the ur-function $f_{\alpha^+}$ holds for $\alpha$ ranging over $X$:
\begin{equation}\label{eq:SUMF}
\forall X\neq \emptyset {(\forall f_{\alpha^+})}_{\alpha \in X} \exists F_X \exists Y\left(F_X:X \twoheadrightarrow Y \wedge {\bigwedge}_{\alpha\in X} F_X:\alpha\mapsto \imath\beta(f_{\alpha^+}:\alpha\mapsto\beta)\right)
\end{equation}
$\Box$
\end{Axiom}

\noindent With SUM-F, all non-logical axioms of the present nonstandard theory have been introduced. But still, rules of inference must be given to derive meaningful theorems from SUM-F. So, the rules of inference that follow have to be seen as part of the \emph{logic}.

\begin{Rule}\label{inf:NUE}\rm Nonstandard Universal Elimination:
\begin{equation}
\forall X\Psi\left( {(\rotatechar{K} f_{\alpha^+})}_{\alpha\in X}, {\bigwedge}_{\alpha\in X} \right) \vdash [{\rm\hat{\mathbf{X}}}\backslash X]\Psi  \hfill {\rm for\ any\ constant\ {\rm\hat{\mathbf{X}}}}
\end{equation}
where $\Psi\left( {(\rotatechar{K} f_{\alpha^+})}_{\alpha\in X}, \bigwedge_{\alpha\in X} \right)$ is a formula with an occurrence of a generalized multiple quantifier and of a conjunctive operator with variable range, and where $[{\rm\hat{\mathbf{X}}}\backslash X]\Psi$ is a formula with an occurrence of a multiple quantifier ${(\rotatechar{K} f_{\alpha^+})}_{\alpha\in {\rm\hat{\mathbf{X}}}}$ and of a conjunctive operator $\bigwedge_{\alpha\in {\rm\hat{\mathbf{X}}}}$ with constant range. \hfill$\Box$
\end{Rule}

\noindent Thus speaking, from SUM-F we can deduce a formula
\begin{equation}\label{eq:NUE}
{(\forall f_{\alpha^+})}_{\alpha \in {\rm\hat{\mathbf{X}}}} \exists F_{\rm\hat{\mathbf{X}}} \exists Y\left(F_{\rm\hat{\mathbf{X}}}:{\rm\hat{\mathbf{X}}} \twoheadrightarrow Y \wedge {\bigwedge}_{\alpha\in {\rm\hat{\mathbf{X}}}} F_{\rm\hat{\mathbf{X}}}:\alpha\mapsto \imath\beta(f_{\alpha^+}:\alpha\mapsto\beta)\right)
\end{equation}
for any constant ${\rm\hat{\mathbf{X}}}$ designating a set.

\begin{Rule}\label{inf:MUQE}\rm Multiple Universal Elimination:
\begin{equation}
{(\forall f_{\alpha^+})}_{\alpha \in {\rm\hat{\mathbf{X}}}}\Phi(f_{\alpha^+}) \vdash [{\rm\hat{\mathbf{f}}}_{\alpha^+}\backslash f_{\alpha^+}]\Phi
\end{equation}
where $\Phi(f_{\alpha^+})$ is a formula with an occurrence of the same composite variable $f_{\alpha^+}$ that also occurs in the preceding multiple universal quantifier ${(\forall f_{\alpha^+})}_{\alpha \in {\rm\hat{\mathbf{X}}}}$, and where ${\rm\hat{\mathbf{f}}}_{\alpha^+}$ is a variable as meant in clause (ii) of Def. \ref{def:extra-vocabulary}.\hfill$\Box$
\end{Rule}

\noindent Thus speaking, from a sentence (\ref{eq:NUE}), which is an instance of SUM-F derived by inference rule \ref{inf:NUE}, we can derive a formula
\begin{equation}\label{eq:MUQE}
\exists F_{\rm\hat{\mathbf{X}}} \exists Y\left(F_{\rm\hat{\mathbf{X}}}:{\rm\hat{\mathbf{X}}} \twoheadrightarrow Y \wedge {\bigwedge}_{\alpha\in {\rm\hat{\mathbf{X}}}} F_{\rm\hat{\mathbf{X}}}:\alpha\mapsto \imath\beta({\rm\hat{\mathbf{f}}}_{\alpha^+}:\alpha\mapsto\beta)\right)
\end{equation}
for each variable ${\rm\hat{\mathbf{f}}}_{\alpha^+}$ ranging over a family of ur-functions indexed in ${\rm\hat{\mathbf{X}}}$. Note that the range of such a variable  ${\rm\hat{\mathbf{f}}}_{\alpha^+}$ is constructed by assigning to each of the simple variables $f_{{\rm\bm{\hat{\alpha}}}^+}$ semantically occurring in formula (\ref{eq:NUE}) a constant value ${\rm\hat{\mathbf{u}}}_{{\rm\bm{\hat{\alpha}}}^+}$.

\begin{Rule}\label{inf:RuleC}\rm Nonstandard Rule-C:
\begin{equation}
\exists t \Phi \vdash [{\rm\hat{\mathbf{t}}}\backslash t]\Phi
\end{equation}
where $t$ is a simple variable $x$ ranging over sets or a simple variable $f_{\rm\hat{\mathbf{X}}}$ ranging over functions on a constant set, and where ${\rm\hat{\mathbf{t}}}$ is a constant in the range of $t$ that does not occur in $\Phi$ but for which $[{\rm\hat{\mathbf{t}}}\backslash t]\Phi$ holds. If $\Phi$ has an occurrence of a generalized multiple quantifier ${(\rotatechar{K} f_{\alpha^+})}_{\alpha\in t}$, then $[{\rm\hat{\mathbf{t}}}\backslash t]\Phi$ has an occurrence of a multiple quantifier ${(\rotatechar{K} f_{\alpha^+})}_{\alpha\in {\rm\hat{\mathbf{t}}}}$; if $\Phi$ has an occurrence of a conjunctive operator $\bigwedge_{\alpha\in t}$ with variable range, then $[{\rm\hat{\mathbf{t}}}\backslash t]\Phi$ has an occurrence of a conjunctive operator $\bigwedge_{\alpha\in {\rm\hat{\mathbf{t}}}}$ with constant range. \hfill$\Box$
\end{Rule}

\noindent Thus speaking, from SUM-F we can deduce a formula
\begin{equation}\label{eq:RuleC}
\exists Y({\rm\hat{\mathbf{F}}}_{\rm\hat{\mathbf{X}}}:{\rm\hat{\mathbf{X}}}\twoheadrightarrow Y) \wedge
{\bigwedge}_{\alpha\in {\rm\hat{\mathbf{X}}}} {\rm\hat{\mathbf{F}}}_{\rm\hat{\mathbf{X}}}:\alpha\mapsto \imath\beta({\rm\hat{\mathbf{f}}}_{\alpha^+}:\alpha\mapsto\beta)
\end{equation}
which is a conjunction of a standard first-order formula and a nonstandard formula with an occurrence of the new constant ${\rm\hat{\mathbf{F}}}_{\rm\hat{\mathbf{X}}}$, designating the sum function on ${\rm\hat{\mathbf{X}}}$, in the scope of a conjunctive operator. Of course, this conjunction $\Psi \wedge \Phi$ is true if and only if both its members are true. This requires one more inference rule.

\begin{Rule}\label{inf:CES}\rm Conjunctive Operator Elimination:
\begin{equation}
{\bigwedge}_{\alpha\in {\rm\hat{\mathbf{X}}}} \Psi(\alpha) \vdash [{\rm\bm{\hat{\alpha}}}\backslash \alpha] \Psi(\alpha)
\end{equation}
where $\Psi(\alpha)$ is a formula of the type $t:t^\prime \mapsto t^{\prime\prime}$ that is open in $\alpha$, and ${\rm\bm{\hat{\alpha}}}$ any constant designating an element of ${\rm\hat{\mathbf{X}}}$. \hfill$\Box$
\end{Rule}

\noindent Thus speaking, from the right member of the conjunction (\ref{eq:RuleC}) we can derive an entire scheme, consisting of one standard first-order formula
\begin{equation}\label{eq:CES}
{\rm\hat{\mathbf{F}}}_{\rm\hat{\mathbf{X}}}:{\rm\bm{\hat{\alpha}}}\mapsto \imath\beta({\rm\hat{\mathbf{u}}}_{{\rm\bm{\hat{\alpha}}}^+}:{\rm\bm{\hat{\alpha}}}\mapsto\beta)
\end{equation}
for each constant ${\rm\hat{\mathbf{u}}}_{{\rm\bm{\hat{\alpha}}}^+}$. So given an infinitary conjunction $\bigwedge_{\alpha\in {\rm\hat{\mathbf{X}}}} {\rm\hat{\mathbf{F}}}_{\rm\hat{\mathbf{X}}}:\alpha\mapsto \imath\beta({\rm \hat{\mathbf{f}}}_{\alpha^+}:\alpha\mapsto\beta)$, the sentences (\ref{eq:CES}) derived by rule \ref{inf:CES} are true for each constant ${\rm\hat{\mathbf{u}}}_{{\rm\bm{\hat{\alpha}}}^+}$ semantically occurring in Eq. (\ref{eq:RuleC}).

\begin{Remark}\label{rem:ModelOfFunction}\rm
Given an individual set ${\rm\underline{\mathbf{X}}}$ and a variable ${\rm\underline{\mathbf{f}}}_{\alpha^+}$ that ranges over a family of ur-functions indexed in ${\rm\underline{\mathbf{X}}}$ in the universe of the intended model, the unique sum function ${\rm\underline{\mathbf{F}}}_{\rm\underline{\mathbf{X}}}$ for which
\begin{equation}
{\bigwedge}_{\alpha\in {\rm\underline{\mathbf{X}}}} {\rm\underline{\mathbf{F}}}_{\rm\underline{\mathbf{X}}}:\alpha\mapsto \imath\beta({\rm\underline{\mathbf{f}}}_{\alpha^+}:\alpha\mapsto\beta)
\end{equation}
can, using set-builder notation, be identified with the individual
\begin{equation}
  {\rm\underline{\mathbf{F}}}_{\rm\underline{\mathbf{X}}} = \{\langle \alpha,\beta\rangle \ | \ \alpha\in{\rm\underline{\mathbf{X}}}\wedge\beta={\rm\underline{\mathbf{f}}}_{\alpha^+}(\alpha)\}_{\rm\underline{\mathbf{X}}}
\end{equation}
in the universe of the intended model. The graph of ${\rm\underline{\mathbf{F}}}_{\rm\underline{\mathbf{X}}}$, i.e. the set $\{\langle \alpha,\beta\rangle \ | \ \alpha\in{\rm\underline{\mathbf{X}}}\wedge\beta={\rm\underline{\mathbf{f}}}_\alpha(\alpha)\}$ is certain to exist, see Th. \ref{th:graph} (next section). So constructing a sum function is a means to constructing a set. \hfill$\Box$
\end{Remark}

\begin{Example}\label{ex:1-omega}\rm Consider the infinite ordinal $\bm{\omega}$ from Ax. \ref{ax:INF}: its elements are the finite ordinals 0, 1, 2, ... Applying Nonstandard Universal Elimination, we thus deduce from SUM-F that
\begin{equation}\label{eq:GMQEforB}
{(\forall f_{\alpha^+})}_{\alpha \in \bm{\omega}} \exists F_{\bm{\omega}} \exists Y
\left( F_{\bm{\omega}}:\bm{\omega} \twoheadrightarrow Y \wedge {\bigwedge}_{\alpha\in \bm{\omega}} F_{\mathbb{B}}:\alpha\mapsto \imath\beta(f_{\{\alpha\}}:\alpha\mapsto\beta)\right)
\end{equation}
On account of the ur-function axiom \ref{ax:UFA} we have
\begin{equation}
  \forall x \in \bm{\omega}\exists f_{x^+}(f_{x^+}: x\mapsto x)
\end{equation}
That is, for any finite ordinal $x$ there is an ur-function that on the singleton of $x$ that maps $x$ to itself. Let the variable ${\rm\hat{\mathbf{f}}}^{1}_{\alpha^+}$ range over these identity ur-functions; applying Multiple Universal Elimination to the sentence (\ref{eq:GMQEforB}) then yields a sentence
\begin{equation}\label{eq:existsFonB}
\exists F_{\bm{\omega}} \exists Y\left(F_{\bm{\omega}}: \bm{\omega} \twoheadrightarrow Y \wedge {\bigwedge}_{\alpha\in \bm{\omega}} F_{\bm{\omega}}:\alpha\mapsto \imath\beta({\rm\hat{\mathbf{f}}}^{1}_{\alpha^+}:\alpha\mapsto\beta)\right)
\end{equation}
Introducing the new constant $1_{\bm{\omega}}$ by applying Rule-C to the sentence (\ref{eq:existsFonB}) and substituting $\imath\beta({\rm\hat{\mathbf{f}}}^{1}_{\alpha^+}:\alpha\mapsto\beta) = \alpha$ then yields the conjunction
\begin{equation}\label{eq:F-B}
1_{\bm{\omega}}:\bm{\omega} \twoheadrightarrow \bm{\omega} \wedge {\bigwedge}_{\alpha\in \bm{\omega}} 1_{\bm{\omega}}:\alpha\mapsto \alpha)
\end{equation}
By applying Conjunctive Operator Elimination to the right member of this conjunction (\ref{eq:F-B}), we obtain the countable scheme
\begin{equation}
\left\{ \begin{array}{l}
          1_{\bm{\omega}}:0 \mapsto 0 \\
          1_{\bm{\omega}}:1 \mapsto 1 \\
          1_{\bm{\omega}}:2 \mapsto 2 \\
          \ \ \vdots
        \end{array}
\right.
\end{equation}
This example demonstrates, strictly within the language of $\mathfrak{T}$, how SUM-F and the inference rules can be used to construct the identity function on $\bm{\omega}$ from a family of ur-functions indexed in $\bm{\omega}$. \hfill$\Box$
\end{Example}

\begin{Remark}\label{rem:sentences}\rm Summarizing, it has thus to be taken
\begin{enumerate}[label=(\roman*)]
\item that SUM-F is a typographically finite sentence;
\item that an instance (\ref{eq:NUE}) of SUM-F, deduced by applying Nonstandard Universal Elimination, is a typographically finite sentence;
\item that a formula (\ref{eq:MUQE}), deduced from an instance of SUM-F by applying Multiple Universal Elimination, is a typographically finite sentence;
\item that a conjunction (\ref{eq:RuleC}), deduced by applying Rule-C to a sentence deduced from SUM-F by successively applying Nonstandard Universal Elimination and Multiple Universal Elimination, is a typographically finite sentence.
\end{enumerate}
We thus get that SUM-F being true means that every instance (\ref{eq:NUE}) of SUM-F obtained by Nonstandard Universal Elimination is true; that an instance (\ref{eq:NUE}) of SUM-F being true means that for any variable ${\rm\hat{\mathbf{f}}}_{\alpha^+}$ ranging over a family of ur-functions indexed in ${\rm\hat{\mathbf{X}}}$, formula (\ref{eq:MUQE}) is true; and that a nonstandard formula (\ref{eq:MUQE}) with an occurrence of a variable ${\rm\hat{\mathbf{f}}}_{\alpha^+}$ being true means that, after applying Rule-C, the scheme of standard formulas (\ref{eq:CES}) obtained by Conjunctive Operator Elimination is true---one true standard formula obtains for every ur-function ${\rm\hat{\mathbf{u}}}_{{\rm\bm{\hat{\alpha}}}^+}$ in the range of the variable ${\rm\hat{\mathbf{f}}}_{\alpha^+}$.\hfill$\Box$
\end{Remark}


\noindent This concludes the axiomatic introduction of the nonstandard theory $\mathfrak{T}$. Since we are primarily interested in the theorems that can be derived from the axioms of $\mathfrak{T}$, no rules have been given for the introduction of (multiple) quantifiers or conjunctive operators. Below such rules are given for the sake of completeness, but these will not be discussed.
\begin{Rule}\rm Conjunctive Operator Introduction:
\begin{equation}\label{eq:COE}
 {\{ [I(\alpha)\backslash\alpha]\Psi(\alpha) \}}_{I(\alpha)\in \mathrm{\hat{\mathbf{X}}}}\vdash
{\bigwedge}_{\alpha\in {\rm\hat{\mathbf{X}}}} \Psi(\alpha)
\end{equation}
where $\Psi(\alpha)$ is a formula of the type $t:t^\prime\mapsto t^{\prime\prime}$ that is open in $\alpha$, and ${\{ [I(\alpha)\backslash\alpha]\Psi(\alpha) \}}_{I(\alpha)\in \mathrm{\hat{\mathbf{X}}}}$ is a possibly infinite collection of formulas, each of which is obtained by interpreting the variable $\alpha$ as a constant $I(\alpha) \in \mathrm{\hat{\mathbf{X}}}$ and replacing $\alpha$ in $\Psi(\alpha)$ everywhere by $I(\alpha)$.\hfill$\Box$
\end{Rule}

\noindent Note that the \textbf{collection} of formulas ${\{ [I(\alpha)\backslash\alpha]\Psi(\alpha) \}}_{I(\alpha)\in \mathrm{\hat{\mathbf{X}}}}$ in Eq. (\ref{eq:COE}) is itself not a well-formed formula of the language $L_{\mathfrak{T}}$, but each of the formulas in the collection is.

\begin{Rule}\rm Multiple Universal Introduction:
\begin{equation}
\Phi\left({{\bigwedge}_{\alpha\in {\rm\hat{\mathbf{X}}}}\Psi({\rm\hat{\mathbf{f}}}_{\alpha^+}) }\right)
\vdash
{(\forall f_{\alpha^+})}_{\alpha \in {\rm\hat{\mathbf{X}}}}[f_{\alpha^+}\backslash{\rm\hat{\mathbf{f}}}_{\alpha^+}]\Phi
\end{equation}
where $\Phi\left({{\bigwedge}_{\alpha\in {\rm\hat{\mathbf{X}}}}\Psi({\rm\hat{\mathbf{f}}}_{\alpha^+}) }\right)$ denotes a formula $\Phi$ with a subformula ${\bigwedge}_{\alpha\in {\rm\hat{\mathbf{X}}}}\Psi({\rm\hat{\mathbf{f}}}_{\alpha^+})$ (implying that $\Psi$ is an atomic formula of the type $t:t^\prime\mapsto t^{\prime\prime}$), and where the variable ${\rm\hat{\mathbf{f}}}_{\alpha^+}$ ranges over an \textbf{arbitrary} family of ur-functions indexed in ${\rm\hat{\mathbf{X}}}$.\hfill$\Box$
\end{Rule}

\begin{Rule}\rm Multiple Existential Introduction:
\begin{equation}
\Phi\left({{\bigwedge}_{\alpha\in {\rm\hat{\mathbf{X}}}}\Psi({\rm\hat{\mathbf{f}}}_{\alpha^+}) }\right)
\vdash
{(\exists f_{\alpha^+})}_{\alpha \in {\rm\hat{\mathbf{X}}}}[f_{\alpha^+}\backslash{\rm\hat{\mathbf{f}}}_{\alpha^+}]\Phi
\end{equation}
where $\Phi\left({{\bigwedge}_{\alpha\in {\rm\hat{\mathbf{X}}}}\Psi({\rm\hat{\mathbf{f}}}_{\alpha^+}) }\right)$ denotes a formula $\Phi$ with a subformula ${\bigwedge}_{\alpha\in {\rm\hat{\mathbf{X}}}}\Psi({\rm\hat{\mathbf{f}}}_{\alpha^+})$ (implying that $\Psi$ is an atomic formula of the type $t:t^\prime\mapsto t^{\prime\prime}$), and where the variable ${\rm\hat{\mathbf{f}}}_{\alpha^+}$ ranges over a \textbf{specific} family of ur-functions indexed in ${\rm\hat{\mathbf{X}}}$.\hfill$\Box$
\end{Rule}

\begin{Remark}\rm
Nonstandard Universal Quantification, i.e. the rule
\begin{equation}
 \Psi({\rm\hat{\mathbf{X}}}) \vdash \forall X [X\backslash {\rm\hat{\mathbf{X}}}]\Psi
\end{equation}
for a nonstandard formula $\Psi$ with an occurrence of an \textbf{arbitrary} constant ${\rm\hat{\mathbf{X}}}$, and Nonstandard Existential Quantification, i.e. the rule
\begin{equation}
\Psi({\rm\hat{\mathbf{X}}}) \vdash \exists X [X\backslash {\rm\hat{\mathbf{X}}}]\Psi
\end{equation}
for a nonstandard formula $\Psi$ with an occurrence of a \textbf{specific} constant ${\rm\hat{\mathbf{X}}}$, are the same as in the standard case, but with the understanding that upon quantification a multiple quantifier ${(\rotatechar{K} f_{\alpha^+})}_{\alpha\in {\rm\hat{\mathbf{X}}}}$ in $\Psi$ becomes a generalized multiple quantifier ${(\rotatechar{K} f_{\alpha^+})}_{\alpha\in X}$ in $[X\backslash {\rm\hat{\mathbf{X}}}]\Psi$, and a conjunctive operator $\bigwedge_{\alpha\in {\rm\hat{\mathbf{X}}}}$ with constant range in $\Psi$ becomes a conjunctive operator $\bigwedge_{\alpha\in X}$ with variable range in $[X\backslash {\rm\hat{\mathbf{X}}}]\Psi$. \hfill$\Box$
\end{Remark}

\section{Discussion}

\subsection{Main theorems}

\begin{Theorem}\label{th:graph}\rm Graph Theorem: for any set $X$ and any function $f_X$ with any codomain $Y$, there is a set $Z$ that is precisely the graph of the function $f_X$---that is, there is a set $Z$ whose elements are precisely the two-tuples $\langle\alpha,\beta\rangle$ made up of arguments and images of the function $f_X$. In a formula:
\begin{equation}\label{eq:graph}
\forall X \forall f_X \forall Y \left(f_X:X\twoheadrightarrow Y \Rightarrow \exists Z\forall \zeta (\zeta \in Z \Leftrightarrow \exists \alpha\exists\beta (\zeta = \langle\alpha,\beta\rangle \wedge f_X:\alpha \mapsto\beta)))\right)
\end{equation}
$\Box$
\end{Theorem}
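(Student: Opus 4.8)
The plan is to exhibit the graph as the codomain of a single, purpose-built sum function, exploiting the fact that by Ax.~\ref{ax:NEGC} (NEGC) the codomain of any function on a nonempty set is \emph{exactly} its set of images. First I would dispose of the degenerate case $X=\emptyset$: by the remark following Ax.~\ref{ax:emptyfunction} (IN-F) the only function on $\emptyset$ is the inactive function $1_\emptyset$, which maps nothing, so no $\zeta$ satisfies the right-hand side of Eq.~(\ref{eq:graph}) and the empty set supplied by Ax.~\ref{ax:EMPTY} serves as $Z$.

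For $X\neq\emptyset$ the idea is to construct a function $G_X$ on $X$ that sends each $\alpha\in X$ to the two-tuple $\langle\alpha,\beta\rangle$ built from $\alpha$ and its image $\beta=\imath\beta(f_X:\alpha\mapsto\beta)$; this image exists and is unique by Ax.~\ref{ax:GEN-F} (GEN-F) together with Ax.~\ref{ax:NEGD}, and the two-tuple itself exists by Def.~\ref{def:TwoTuple}. To manufacture $G_X$ through the sum function axiom I would invoke Ax.~\ref{ax:UFA} (UFA) to obtain, for each $\alpha\in X$, an ur-function $g_{\alpha^+}$ on $\alpha^+$ mapping $\alpha\mapsto\langle\alpha,\beta\rangle$; these assemble into a family of ur-functions indexed in $X$. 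Feeding this family into SUM-F (Ax.~\ref{ax:SUMF}) and stripping off the nonstandard apparatus exactly as in Example~\ref{ex:1-omega}---Nonstandard Universal Elimination (Rule~\ref{inf:NUE}) to set the index variable to $X$, Multiple Universal Elimination (Rule~\ref{inf:MUQE}) to fix the chosen family, Rule-C (Rule~\ref{inf:RuleC}) to name the sum function, and Conjunctive Operator Elimination (Rule~\ref{inf:CES}) to read off the individual mappings---yields a function $G_X:X\twoheadrightarrow W$ with $G_X:\alpha\mapsto\langle\alpha,\beta\rangle$ for every $\alpha\in X$.

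It then remains to verify that $Z:=W$ satisfies Eq.~(\ref{eq:graph}). By Ax.~\ref{ax:NEGC} the codomain $W$ equals $\{\zeta \mid \exists\eta\in X\,(G_X:\eta\mapsto\zeta)\}$, i.e. precisely the collection of two-tuples $\langle\alpha,f_X(\alpha)\rangle$ with $\alpha\in X$. The forward implication of the biconditional is then immediate, since every $\zeta\in W$ has the form $\langle\eta,\beta\rangle$ with $f_X:\eta\mapsto\beta$. For the converse, given $\zeta=\langle\alpha,\beta\rangle$ with $f_X:\alpha\mapsto\beta$, Ax.~\ref{ax:NONA} forces $\alpha\in X$ and the uniqueness clause of GEN-F forces $\beta$ to be exactly the image $G_X$ assigns to $\alpha$, whence $\zeta=G_X(\alpha)\in W$.

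I expect the main obstacle to be bookkeeping rather than conceptual. One must check that the intended family $\{g_{\alpha^+}\}_{\alpha\in X}$ is genuinely an admissible value of the composite variable governed by the multiple quantifier in SUM-F, and that the atomic mapping formula placed inside the conjunctive operator has the precise syntactic shape demanded by Def.~\ref{def:extra-vocabulary}, so that each application of the four nonstandard inference rules is licensed. The one genuinely non-routine point is the backward direction of the biconditional, where Ax.~\ref{ax:NONA} is indispensable: without it a stray pair $\langle\alpha,\beta\rangle$ with $\alpha\notin X$ could in principle satisfy $f_X:\alpha\mapsto\beta$ and spoil the identification of $W$ with the full graph.
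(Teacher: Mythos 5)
Your proposal takes essentially the same route as the paper's own proof: obtain, via Ax.~\ref{ax:UFA}, an ur-function on each $\alpha^+$ sending $\alpha$ to $\langle\alpha,\beta\rangle$ where $f_X:\alpha\mapsto\beta$, assemble these into a sum function by SUM-F, and identify its unique codomain (Ax.~\ref{ax:NEGC}) with the graph. In fact you are somewhat more careful than the paper, which silently skips the $X=\emptyset$ case (where SUM-F and GEN-F do not apply) and does not spell out the backward direction of the biconditional via Ax.~\ref{ax:NONA}.
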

\begin{proof} Let ${\rm\hat{\mathbf{X}}}$ be an arbitrary set, and let the function ${\rm\hat{\mathbf{f}}}_{\rm\hat{\mathbf{X}}}$ be an arbitrary function on ${\rm\hat{\mathbf{X}}}$.  On account of GEN-F (Ax. \ref{ax:GEN-F}), for any $\alpha\in X$ there is then precisely one $\beta$ such that ${\rm\hat{\mathbf{f}}}_{\rm\hat{\mathbf{X}}}:\alpha\mapsto\beta$ . Using Def. \ref{def:TwoTuple}, there exists then for each $\alpha\in {\rm\hat{\mathbf{X}}}$ a singleton $\langle\alpha,\beta\rangle^+$ such that ${\rm\hat{\mathbf{f}}}_{\rm\hat{\mathbf{X}}}:\alpha\mapsto\beta$. On account of the ur-function axiom (Ax. \ref{ax:UFA}), there exists then also an ur-function $u_{\alpha^+}: \alpha^+ \twoheadrightarrow \langle\alpha,\beta\rangle^+\ , \ u_{\alpha^+}: \alpha \mapsto \langle\alpha,\beta\rangle$ for each $\alpha\in {\rm\hat{\mathbf{X}}}$. Thus, on account of SUM-F there is a sum function ${\rm\hat{\mathbf{G}}}_{\rm\hat{\mathbf{X}}}$ with some codomain $Z$ such that ${\rm\hat{\mathbf{G}}}_{\rm\hat{\mathbf{X}}}$ maps every $\alpha\in X$ precisely to the two-tuple $\langle\alpha, \beta\rangle$ for which ${\rm\hat{\mathbf{f}}}_{\rm\hat{\mathbf{X}}}:\alpha\mapsto\beta$. On account of GEN-F, the codomain $Z$ of ${\rm\hat{\mathbf{G}}}_{\rm\hat{\mathbf{X}}}$ exists, and on account of Ax. \ref{ax:NEGC} it is unique: this codomain is precisely the graph of ${\rm\hat{\mathbf{f}}}_{\rm\hat{\mathbf{X}}}$. Since ${\rm\hat{\mathbf{X}}}$ and ${\rm\hat{\mathbf{f}}}_{\rm\hat{\mathbf{X}}}$ were arbitrary, the Graph Theorem follows from universal generalization.
\end{proof}

\noindent In the intended model of $\mathfrak{T}$, there is thus no risk involved in identifying a function $f$ with the individual ${\rm graph}(f)_{{\rm dom}(f)}$, where ${\rm graph}(f)$ is the graph of $f$ and ${\rm dom}(f)$ the domain of $f$, cf. Rem. \ref{rem:ModelOfFunction}.

\begin{Theorem}\label{th:MAIN}\rm Main Theorem: for any nonempty set $X$, if there is a functional relation $\Phi(\alpha, \beta)$ that relates every $\alpha$ in $X$ to precisely one $\beta$, then there is a function $F_X$ with some codomain $Y$ that maps every $\eta \in X$ to precisely that $\xi \in Y$ for which $\Phi(\eta, \xi)$. In a formula, using the iota-operator:
\begin{equation}\label{eq:main}
\forall X\neq\emptyset (\forall \alpha \in X \exists! \beta \Phi(\alpha, \beta) \Rightarrow \exists F_X\exists Y (F_X:X\twoheadrightarrow Y \wedge \forall \eta \in X(F_X: \eta \mapsto \imath\xi\Phi(\eta, \xi))))
\end{equation}
$\Box$
\end{Theorem}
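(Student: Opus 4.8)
The plan is to follow the same strategy as the proof of the Graph Theorem (Th.~\ref{th:graph}): realize the desired $F_X$ as a \emph{sum function} whose component ur-functions are dictated by the functional relation $\Phi$. First I would fix an arbitrary nonempty set ${\rm\hat{\mathbf{X}}}$ and assume the antecedent, i.e.\ that for every $\alpha \in {\rm\hat{\mathbf{X}}}$ there is precisely one $\beta$ with $\Phi(\alpha,\beta)$. This uniqueness hypothesis is exactly what guarantees that for each such $\alpha$ the iota-term $\imath\xi\Phi(\alpha,\xi)$ denotes a unique, existing thing, so that the prescription $\alpha \mapsto \imath\xi\Phi(\alpha,\xi)$ is well-defined and no nonsensical terms enter the argument.

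Next I would assemble the family of ur-functions and invoke SUM-F. For each $\alpha \in {\rm\hat{\mathbf{X}}}$, writing $\beta := \imath\xi\Phi(\alpha,\xi)$, the Ur-Function Axiom (Ax.~\ref{ax:UFA}) supplies an ur-function $u_{\alpha^+}$ with $u_{\alpha^+}:\alpha^+ \twoheadrightarrow \beta^+$ and $u_{\alpha^+}:\alpha \mapsto \beta$; collecting these yields a family of ur-functions indexed in ${\rm\hat{\mathbf{X}}}$ whose component at $\alpha$ sends $\alpha$ to $\imath\xi\Phi(\alpha,\xi)$. I would then run this family through the chain of nonstandard inference rules exactly as set out after SUM-F: Nonstandard Universal Elimination (Rule~\ref{inf:NUE}) to instantiate the set variable as ${\rm\hat{\mathbf{X}}}$, Multiple Universal Elimination (Rule~\ref{inf:MUQE}) to instantiate the multiple universal quantifier with the family just built, and Nonstandard Rule-C (Rule~\ref{inf:RuleC}) to introduce a constant ${\rm\hat{\mathbf{F}}}_{\rm\hat{\mathbf{X}}}$ witnessing the sum function together with a codomain $Y$. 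This delivers ${\rm\hat{\mathbf{F}}}_{\rm\hat{\mathbf{X}}}:{\rm\hat{\mathbf{X}}} \twoheadrightarrow Y$ together with the conjunctive operator ${\bigwedge}_{\alpha\in {\rm\hat{\mathbf{X}}}} {\rm\hat{\mathbf{F}}}_{\rm\hat{\mathbf{X}}}:\alpha \mapsto \imath\beta(u_{\alpha^+}:\alpha\mapsto\beta)$. Since $\imath\beta(u_{\alpha^+}:\alpha\mapsto\beta) = \imath\xi\Phi(\alpha,\xi)$ by construction, Conjunctive Operator Elimination (Rule~\ref{inf:CES}) yields ${\rm\hat{\mathbf{F}}}_{\rm\hat{\mathbf{X}}}:\eta \mapsto \imath\xi\Phi(\eta,\xi)$ for every $\eta \in {\rm\hat{\mathbf{X}}}$. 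As ${\rm\hat{\mathbf{X}}}$ and $\Phi$ were arbitrary, the theorem follows by universal generalization.

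The hard part, as in the Graph Theorem, is not any set-theoretic content but the careful bookkeeping of the nonstandard quantifier machinery. In particular I would need to check that the $\Phi$-determined family of ur-functions is a legitimate value for the composite variable ${\rm\hat{\mathbf{f}}}_{\alpha^+}$ in the multiple universal quantifier of SUM-F, and that the iota-terms $\imath\beta(u_{\alpha^+}:\alpha\mapsto\beta)$ sitting inside the conjunctive operator genuinely collapse to $\imath\xi\Phi(\eta,\xi)$ after elimination. Once the uniqueness hypothesis has secured the well-definedness of the prescription, these remaining steps are routine applications of the elimination rules, with the codomain $Y$ supplied directly by the existential quantifier of SUM-F and identifiable as the image set via Ax.~\ref{ax:NEGC} if one wishes.
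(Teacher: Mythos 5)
Your proposal is correct and follows essentially the same route as the paper's own proof: fix an arbitrary nonempty ${\rm\hat{\mathbf{X}}}$, use the uniqueness hypothesis and the Ur-Function Axiom to obtain the family of ur-functions sending each $\alpha$ to $\imath\xi\Phi(\alpha,\xi)$, feed that family through Nonstandard Universal Elimination, Multiple Universal Elimination, Rule-C and Conjunctive Operator Elimination, and then generalize. The only cosmetic difference is that you make explicit the well-definedness of the iota-terms and the identification of $Y$ via Ax.~\ref{ax:NEGC}, which the paper leaves implicit.
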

\begin{proof} Let ${\rm\hat{\mathbf{X}}}$ be an arbitrary nonempty set. Suppose then, that for every $\alpha \in \mathbf{X}$ we have precisely one $\beta$ such that $\Phi(\alpha, \beta)$. On account of the ur-function axiom (Ax. \ref{ax:UFA}), for an arbitrary constant ${\rm\bm{\hat{\alpha}}} \in {\rm\hat{\mathbf{X}}}$ there exists then also an ur-function ${\rm\hat{\mathbf{u}}}_{{\rm\bm{\hat{\alpha}}}^+}$ for which
\begin{equation}\label{eq:ur-implicit}
{\rm\hat{\mathbf{u}}}_{\rm\bm{\hat{\alpha}}^+}:{\rm\bm{\hat{\alpha}}}\mapsto \imath\beta\Phi({\rm\bm{\hat{\alpha}}},\beta)
\end{equation}
Let the variable ${\rm\hat{\mathbf{f}}}_{\alpha^+}$ range over these ur-functions. We then deduce from SUM-F by applying Nonstandard Universal Elimination and subsequently Multiple Universal Elimination that
\begin{equation}
\exists F_{\rm\hat{\mathbf{X}}}\exists Y \left(F_{\rm\hat{\mathbf{X}}}:{\rm\hat{\mathbf{X}}}\twoheadrightarrow Y \wedge {\bigwedge}_{\alpha \in {\rm\hat{\mathbf{X}}}}F_{\rm\hat{\mathbf{X}}}: \alpha \mapsto \imath\beta({\rm\hat{\mathbf{f}}}_{\alpha^+}: \alpha \mapsto \beta)\right)
\end{equation}
By subsequently applying Rule-C and Conjunctive Operator Elimination we then deduce the scheme
\begin{equation}
{\rm\hat{\mathbf{F}}}_{\rm\hat{\mathbf{X}}}:{\rm\bm{\hat{\alpha}}}\mapsto \imath\beta\Phi({\rm\bm{\hat{\alpha}}},\beta)
\end{equation}
for the sum function ${\rm\hat{\mathbf{F}}}_{\rm\hat{\mathbf{X}}}$. Generalizing this scheme we obtain
\begin{equation}
\forall \eta \in {\rm\hat{\mathbf{X}}}({\rm\hat{\mathbf{F}}}_{\rm\hat{\mathbf{X}}}: \eta \mapsto \imath\xi\Phi(\eta, \xi)))
\end{equation}
We thus obtain
\begin{equation}\label{eq:proofmain}
\exists F_{\rm\hat{\mathbf{X}}}\exists Y \left(F_{\rm\hat{\mathbf{X}}}:{\rm\hat{\mathbf{X}}}\twoheadrightarrow Y \wedge
\forall \eta \in {\rm\hat{\mathbf{X}}}(F_{\rm\hat{\mathbf{X}}}: \eta \mapsto \imath\xi\Phi(\eta, \xi)))\right)
\end{equation}
Since the functional relation was assumed, we get $\forall\alpha \in {\rm\hat{\mathbf{X}}} \exists!\beta \Phi(\alpha, \beta) \Rightarrow \Psi$ where $\Psi$ is formula (\ref{eq:proofmain}). Since ${\rm\hat{\mathbf{X}}}$ was an arbitrary nonempty set, we can quantify over nonempty sets. This gives precisely the requested formula (\ref{eq:main}).
\end{proof}
\begin{Remark}\rm Theorem \ref{th:MAIN} is an infinite scheme, with one formula for every functional relation $\Phi$. The point is this: given a set $X$, on account of this theorem we can \textbf{construct} a function $f_X$ by giving a function prescription---what we then actually do is defining an ur-function for every $\alpha\in X$; the function $f_X$ then exists on account of SUM-F. And by constructing the function we construct its graph, which exists on account of Th. \ref{th:graph}. Generally speaking, if we define an ur-function for each singleton $\alpha^+\subset X$, then we do not yet have the graphs of these ur-functions in a set. But in the present framework, the set of these graphs is guaranteed to exist. Ergo, giving a function prescription \emph{is} constructing a set!\hfill$\Box$
\end{Remark}

\subsection{Derivation of SEP and SUB of ZF}
We start by proving that the infinite axiom scheme SEP of ZF is a theorem (scheme) of our theory $\mathfrak{T}$:

\begin{Theorem}\label{SEP}\rm Separation Axiom Scheme of ZF:\\
$\forall X \exists Y \forall \alpha( \alpha \in Y \Leftrightarrow \alpha \in X \wedge \Phi(\alpha))$
\end{Theorem}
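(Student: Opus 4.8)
The plan is to reduce Separation to two already-available tools: the Main Theorem (Th.~\ref{th:MAIN}), which lets us realize any functional relation as a genuine function on $X$, and the Inverse Image Set Axiom INV (Ax.~\ref{ax:INV}), which lets us carve out a fibre of such a function as a subset of $X$. The idea is to build a characteristic function for $\Phi$ on $X$ and then take its fibre over $1$; that fibre is exactly the set $\{\alpha\in X \mid \Phi(\alpha)\}$ demanded by the scheme.

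First I would dispose of the degenerate case $X=\emptyset$ separately, since both the Main Theorem and INV are stated only for nonempty sets. By EMPTY (Ax.~\ref{ax:EMPTY}) the set $\emptyset$ exists, and $\forall\alpha(\alpha\in\emptyset \Leftrightarrow \alpha\in\emptyset \wedge \Phi(\alpha))$ holds vacuously, so $Y=\emptyset$ witnesses the existential statement when $X=\emptyset$. It then suffices to treat an arbitrary nonempty set ${\rm\hat{\mathbf{X}}}$ and generalize at the end.

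For nonempty ${\rm\hat{\mathbf{X}}}$, I would introduce the auxiliary relation $\Psi(\alpha,\beta)$ given by
\[
(\Phi(\alpha)\wedge\beta=1)\vee(\neg\Phi(\alpha)\wedge\beta=0),
\]
using the Zermelo ordinals $0$ and $1$ of Notation~\ref{not:ZermeloOrdinals}. Since $0\neq1$ and, by the law of excluded middle, exactly one of $\Phi(\alpha)$, $\neg\Phi(\alpha)$ holds, this relation satisfies $\forall\alpha\in{\rm\hat{\mathbf{X}}}\,\exists!\beta\,\Psi(\alpha,\beta)$; it is therefore a legitimate input to the Main Theorem. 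That theorem yields a function $F_{\rm\hat{\mathbf{X}}}$ with some codomain such that $F_{\rm\hat{\mathbf{X}}}:\eta\mapsto\imath\xi\Psi(\eta,\xi)$ for every $\eta\in{\rm\hat{\mathbf{X}}}$. Because $\imath\xi\Psi(\alpha,\xi)=1$ precisely when $\Phi(\alpha)$ holds and equals $0\neq1$ otherwise, the mapping $F_{\rm\hat{\mathbf{X}}}:\alpha\mapsto1$ is true for $\alpha\in{\rm\hat{\mathbf{X}}}$ exactly when $\Phi(\alpha)$.

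The final step extracts the fibre over $1$. Applying INV (Ax.~\ref{ax:INV}) to $F_{\rm\hat{\mathbf{X}}}$ with the thing $\beta=1$ produces a set $Z\subset{\rm\hat{\mathbf{X}}}$ with $\alpha\in Z \Leftrightarrow \alpha\in{\rm\hat{\mathbf{X}}}\wedge F_{\rm\hat{\mathbf{X}}}:\alpha\mapsto1$, hence $\alpha\in Z \Leftrightarrow \alpha\in{\rm\hat{\mathbf{X}}}\wedge\Phi(\alpha)$. Thus $Z$ is the required $Y$, and since ${\rm\hat{\mathbf{X}}}$ was arbitrary, universal generalization over sets delivers the full scheme (one instance per $\Phi$). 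I expect no deep obstacle here, as the entire burden is shifted onto the Main Theorem and the primitive axiom INV; the only points needing care are verifying that $\Psi$ is genuinely functional (which rests solely on $0\neq1$ together with excluded middle for $\Phi(\alpha)$) and keeping the empty-domain case separate, exactly because the constructive machinery is formulated only for nonempty domains.
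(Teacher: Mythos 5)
Your proposal is correct and follows essentially the same route as the paper's own proof: both construct a $\{0,1\}$-valued characteristic function for $\Phi$ on $X$ (the paper via explicitly chosen ur-functions fed into the Main Theorem, you via the functional relation $(\Phi(\alpha)\wedge\beta=1)\vee(\neg\Phi(\alpha)\wedge\beta=0)$) and then obtain $Y$ as the inverse image of $1$ using INV. Your separate treatment of $X=\emptyset$ is a small but legitimate point of care that the paper's proof glosses over, since the Main Theorem and INV are stated only for nonempty sets.
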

\begin{proof} Let ${\rm\hat{\mathbf{X}}}$ be an arbitrary set and let $\Phi$ be an arbitrary unary relation on ${\rm\hat{\mathbf{X}}}$. On account of the ur-function axiom (Ax. \ref{ax:UFA}), for an arbitrary constant ${\rm\bm{\hat{\alpha}}} \in {\rm\hat{\mathbf{X}}}$ there exists then an ur-function ${\rm\hat{\mathbf{u}}}_{\rm\bm{\hat{\alpha}}^+}$ for which
\begin{equation}
\left\{ \begin{array}{l} {\rm\hat{\mathbf{u}}}_{\rm\bm{\hat{\alpha}}^+}: {\rm\bm{\hat{\alpha}}} \mapsto 1  \ \ if \ \Phi({\rm\bm{\hat{\alpha}}}) \\
{\rm\hat{\mathbf{u}}}_{\rm\bm{\hat{\alpha}}^+}: {\rm\bm{\hat{\alpha}}} \mapsto 0 \ \ if \ \neg \Phi({\rm\bm{\hat{\alpha}}}) \end{array} \right .
\end{equation}
Let the variable ${\rm\hat{\mathbf{f}}}_{\eta^+}$ range over these ur-functions. On account of Th. \ref{th:MAIN} we then get
\begin{equation}
  \exists F_{\rm\hat{\mathbf{X}}}\forall \eta \in {\rm\hat{\mathbf{X}}}(F_{\rm\hat{\mathbf{X}}}: \eta \mapsto \imath\beta({\rm\hat{\mathbf{f}}}_{\eta^+}:\eta\mapsto\beta))
\end{equation}
Let this sum function be designated by the constant ${\rm\hat{\mathbf{F}}}_{\rm\hat{\mathbf{X}}}$. On account of INV (Ax. \ref{ax:INV}), the inverse image set ${\rm\hat{\mathbf{F}}}_{\rm\hat{\mathbf{X}}}^{-1}(1)$ exists: we then have $\forall \alpha( \alpha \in {\rm\hat{\mathbf{F}}}_{\rm\hat{\mathbf{X}}}^{-1}(1) \Leftrightarrow \alpha \in {\rm\hat{\mathbf{X}}} \wedge \Phi(\alpha))$. Th. \ref{SEP} then obtains from here by existential generalization and universal generalization.
\end{proof}
\noindent Proceeding, we prove that the infinite axiom scheme SUB of ZF is a theorem (scheme) of our theory $\mathfrak{T}$:
\begin{Theorem}\label{SUB}\rm Substitution Axiom Scheme of ZF:\\
$\forall X(\forall \alpha \in X \exists! \beta \Phi(\alpha, \beta) \Rightarrow \exists Z \forall \gamma( \gamma \in Z \Leftrightarrow \exists \xi (\xi \in X \wedge \Phi(\xi, \gamma))))$
\end{Theorem}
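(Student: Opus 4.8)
The plan is to mimic the proof of SEP (Theorem \ref{SEP}), using the Main Theorem to construct a function whose codomain \emph{is} the required set $Z$. The crucial observation is that SUB asks for the image set $\{\gamma \mid \exists\xi\in X(\Phi(\xi,\gamma))\}$ of a functional relation, and by Ax. \ref{ax:NEGC} the codomain of a constructed function is precisely its image set. So, whereas in SEP we needed the \emph{inverse} image set (via INV), here we need the \emph{forward} image, which is handed to us directly as the codomain.

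First I would let ${\rm\hat{\mathbf{X}}}$ be an arbitrary set and $\Phi(\alpha,\beta)$ an arbitrary binary relation, and assume the antecedent $\forall\alpha\in{\rm\hat{\mathbf{X}}}\exists!\beta\,\Phi(\alpha,\beta)$; if the antecedent fails the implication holds trivially. Under this assumption, for each constant ${\rm\bm{\hat{\alpha}}}\in{\rm\hat{\mathbf{X}}}$ the iota-term $\imath\beta\Phi({\rm\bm{\hat{\alpha}}},\beta)$ denotes a unique thing, so by the ur-function axiom (Ax. \ref{ax:UFA}) there is an ur-function ${\rm\hat{\mathbf{u}}}_{{\rm\bm{\hat{\alpha}}}^+}$ with ${\rm\hat{\mathbf{u}}}_{{\rm\bm{\hat{\alpha}}}^+}:{\rm\bm{\hat{\alpha}}}\mapsto\imath\beta\Phi({\rm\bm{\hat{\alpha}}},\beta)$. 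Letting the variable ${\rm\hat{\mathbf{f}}}_{\alpha^+}$ range over these ur-functions, the Main Theorem (Th. \ref{th:MAIN}) — or equivalently the SUM-F machinery via Nonstandard Universal Elimination, Multiple Universal Elimination, Rule-C, and Conjunctive Operator Elimination — yields a sum function ${\rm\hat{\mathbf{F}}}_{\rm\hat{\mathbf{X}}}$ with some codomain $Z$ satisfying ${\rm\hat{\mathbf{F}}}_{\rm\hat{\mathbf{X}}}:{\rm\hat{\mathbf{X}}}\twoheadrightarrow Z$ and $\forall\eta\in{\rm\hat{\mathbf{X}}}({\rm\hat{\mathbf{F}}}_{\rm\hat{\mathbf{X}}}:\eta\mapsto\imath\xi\Phi(\eta,\xi))$.

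Then I would invoke Ax. \ref{ax:NEGC}, which characterizes this codomain $Z$ as exactly the image set: $\forall\gamma(\gamma\in Z\Leftrightarrow\exists\eta\in{\rm\hat{\mathbf{X}}}({\rm\hat{\mathbf{F}}}_{\rm\hat{\mathbf{X}}}:\eta\mapsto\gamma))$. Combining this with the mapping property of ${\rm\hat{\mathbf{F}}}_{\rm\hat{\mathbf{X}}}$ and the uniqueness guaranteed by the antecedent, the condition ${\rm\hat{\mathbf{F}}}_{\rm\hat{\mathbf{X}}}:\eta\mapsto\gamma$ is equivalent to $\Phi(\eta,\gamma)$ for $\eta\in{\rm\hat{\mathbf{X}}}$, so that $\gamma\in Z\Leftrightarrow\exists\xi(\xi\in{\rm\hat{\mathbf{X}}}\wedge\Phi(\xi,\gamma))$. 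Existential generalization on $Z$ then discharges the assumed antecedent, and universal generalization over the arbitrary ${\rm\hat{\mathbf{X}}}$ delivers the full scheme.

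The main obstacle I anticipate is the bookkeeping at the step equating ${\rm\hat{\mathbf{F}}}_{\rm\hat{\mathbf{X}}}:\eta\mapsto\gamma$ with $\Phi(\eta,\gamma)$: one must use the $\exists!$ clause of the antecedent together with the fact that ${\rm\hat{\mathbf{F}}}_{\rm\hat{\mathbf{X}}}$ maps each $\eta$ to $\imath\xi\Phi(\eta,\xi)$, and verify that no \emph{spurious} elements sneak into $Z$. Here one relies on the surjectivity convention built into Ax. \ref{ax:NEGC} — the codomain contains \emph{only} genuine images, nothing more — which is exactly what prevents $Z$ from being too large; this is the function-theoretic analogue of the reasoning that made the inverse-image construction work in the SEP proof.
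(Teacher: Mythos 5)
Your proposal is correct and follows essentially the same route as the paper's own proof: construct the sum function for the functional relation via the Main Theorem (you additionally unfold the ur-function step that Th. \ref{th:MAIN} already encapsulates), identify its codomain with the image set by Ax. \ref{ax:NEGC}, and finish by existential and universal generalization. The only cosmetic difference is that you make explicit the bookkeeping equating ${\rm\hat{\mathbf{F}}}_{\rm\hat{\mathbf{X}}}:\eta\mapsto\gamma$ with $\Phi(\eta,\gamma)$, which the paper leaves implicit.
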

\begin{proof}
Let ${\rm\hat{\mathbf{X}}}$ be an arbitrary set and let there for every $\alpha \in {\rm\hat{\mathbf{X}}}$ be precisely one $\beta$ such that $\Phi(\alpha, \beta)$. Then on account of Th. \ref{th:MAIN}, a sum function ${\rm\hat{\mathbf{F}}}_{\rm\hat{\mathbf{X}}}$ exists for which
\begin{equation}
\forall \alpha \in X ({\rm\hat{\mathbf{F}}}_{\rm\hat{\mathbf{X}}}:{\rm\bm{\hat{\alpha}}}\mapsto \imath\beta\Phi({\rm\bm{\hat{\alpha}}},\beta))
\end{equation}
On account of Ax. \ref{ax:NEGC}, the codomain of ${\rm\hat{\mathbf{F}}}_{\rm\hat{\mathbf{X}}}$ is the image set; denoting this by ${\rm\hat{\mathbf{F}}}_{\rm\hat{\mathbf{X}}}[{\rm\hat{\mathbf{X}}}]$ we then have
\begin{equation}
\forall \gamma( \gamma \in {\rm\hat{\mathbf{F}}}_{\rm\hat{\mathbf{X}}}[{\rm\hat{\mathbf{X}}}] \Leftrightarrow \exists \xi (\xi \in X \wedge \Phi(\xi, \gamma)))
\end{equation}
Since the functional relation $\Phi$ on the arbitrary set ${\rm\hat{\mathbf{X}}}$ was assumed, this is implied by $\forall \alpha \in {\rm\hat{\mathbf{X}}} \exists! \beta \Phi(\alpha, \beta)$. We write out this implication: Th. \ref{SUB} then obtains by existential generalization and universal generalization.
\end{proof}
\noindent These two theorems schemes provide an argument for considering our theory $\mathfrak{T}$ to be not weaker than ZF. However, because in the framework of ZF elements of sets are always sets while in the present framework elements of sets may also be functions (which are not sets), we have strictly speaking not yet proven that every result in ZF about sets automatically translates to the present framework.

\begin{Remark}\rm Should further research on $\mathfrak{T}$ reveal unintended consequences that render it inconsistent or otherwise useless, there is still the possibility to remove SUM-F from $\mathfrak{T}$ add the above theorem schemes \ref{SEP} and \ref{SUB} as axioms to $\mathfrak{T}$. That still gives a theory---although a standard one with infinitely many axioms---that merges set theory and category theory into a single framework.\hfill$\Box$

\end{Remark}

\subsection{Model theory}

\begin{Definition}\label{def:model}\rm
A \textbf{model} $\mathcal{M}$ of the present theory $\mathfrak{T}$ consists of the \textbf{universe} $|\mathcal{M}|$ of $\mathcal{M}$, which is a concrete category made up of a nonempty collection of objects (sets) and a nonempty collection of arrows (functions on sets), and the \textbf{language} $L_\mathcal{M}$ of $\mathcal{M}$, which is the language $L_\mathfrak{T}$ of $\mathfrak{T}$ extended with a constant for every object and for every arrow in $|\mathcal{M}|$, such that the axioms of $\mathfrak{T}$ are valid in $\mathcal{M}$.\hfill$\Box$
\end{Definition}

\noindent In standard first-order logic it is well defined what it means that a formula is `valid' in a model $M$. This notion of validity translates to the framework of $\mathfrak{T}$ for all standard formulas. However, it remains to be established what it means that SUM-F and nonstandard consequences thereof are valid in a model $\mathcal{M}$ of $\mathfrak{T}$. Recall that symbols referring to individuals in $|\mathcal{M}|$ will be underlined to distinguish these individuals from constants of $\mathfrak{T}$.
\begin{Definition}\label{def:validity}\rm (Validity of nonstandard formulas.)
\begin{enumerate}[label=(\roman*)]
  \item a sentence $\forall X \neq \emptyset \Psi$ with a nonstandard subformula $\Psi$, such as the sum function axiom, is \textbf{valid} in a model $\mathcal{M}$ of $\mathfrak{T}$ if and only if for every assignment $g$ that assigns an individual nonempty set $g(X) = \mathrm{\underline{\mathbf{X}}}$ in $|\mathcal{M}|$ as a value to the variable $X$, $[\mathrm{\underline{\mathbf{X}}}\backslash X]\Psi$ is valid in $\mathcal{M}$;
  \item a sentence ${(\forall f_{\alpha^+})}_{\alpha \in {\rm\underline{\mathbf{X}}}}\Phi$ with an occurrence of an individual nonempty set ${\rm\underline{\mathbf{X}}}$ of $|\mathcal{M}|$, such as an instance of SUM-F, is \textbf{valid} in a model $\mathcal{M}$ of $\mathfrak{T}$ if and only if for every `team assignment' $g$ that assigns an individual ur-function $g(f_{\underline{\bm{\alpha}}^+})= \mathrm{\underline{\mathbf{u}}}_{\underline{\bm{\alpha}}^+}$ in $|\mathcal{M}|$ as a value to each variable $f_{\underline{\bm{\alpha}}^+}$ semantically occurring in $\Phi$, the sentence $[\mathrm{\underline{\mathbf{f}}}^g_{\alpha^+} \backslash f_{\alpha^+}]\Phi$ with the variable $\mathrm{\underline{\mathbf{f}}}^g_{\alpha^+}$ ranging over the family of ur-functions ${\left( \mathrm{\underline{\mathbf{u}}}_{\alpha^+}\right)}_{\alpha\in {\rm\underline{\mathbf{X}}}}$ is valid in $\mathcal{M}$;
  \item a sentence $\exists t \Upsilon$ with an occurrence of a simple variable $t$ ranging over sets or over functions on a set $\mathrm{\underline{\mathbf{X}}}$ and with $\Upsilon$ being a nonstandard formula, such as the sentences that can be obtained by successively applying Nonstandard Universal Elimination and Multiple Universal Elimination to SUM-F, is \textbf{valid} in a model $\mathcal{M}$ of $\mathfrak{T}$ if and only if for at least one assignment $g$ that assigns an individual function $g(t) = \mathrm{\underline{\mathbf{F}}}_\mathrm{\underline{\mathbf{X}}}$ or an individual nonempty set $g(t)=\mathrm{\underline{\mathbf{Y}}}$ as value to the variable $t$, the sentence $[g(t)\backslash t]\Upsilon$ is valid in $\mathcal{M}$;
  \item A sentence ${\bigwedge}_{\alpha\in \mathrm{\underline{\mathbf{X}}}} \Psi(\mathrm{\underline{\mathbf{f}}}_{\alpha^+},\alpha)$ is \textbf{valid} in a model $\mathcal{M}$ of $\mathfrak{T}$ if and only if for every assignment $g$ that assigns an individual ur-function $g(\mathrm{\underline{\mathbf{f}}}_{\alpha^+})= \mathrm{\underline{\mathbf{u}}}_{\underline{\bm{\alpha}}^+}$ from the range ${\left( \mathrm{\underline{\mathbf{u}}}_{\alpha^+}\right)}_{\alpha\in {\rm\underline{\mathbf{X}}}}$ of the variable $\mathrm{\underline{\mathbf{f}}}_{\alpha^+}$ and an individual $\underline{\bm{\alpha}}$ as values to the variables $\mathrm{\underline{\mathbf{f}}}_{\alpha^+}$ and $\alpha$ respectively, the sentence $[\underline{\bm{\alpha}}\backslash\alpha]
      [\mathrm{\underline{\mathbf{u}}}_{\underline{\bm{\alpha}}^+}\backslash \mathrm{\underline{\mathbf{f}}}_{\alpha^+}] \Psi$ is valid in $\mathcal{M}$.
\end{enumerate}
This defines the validity of the nonstandard formulas that can be deduced from SUM-F in terms of the well-established validity of standard first-order formulas.\hfill$\Box$
\end{Definition}

\begin{Proposition}\label{prop:notLS}
If $\mathfrak{T}$ has a model $\mathcal{M}$, then $\mathcal{M}$ is not countable.
\end{Proposition}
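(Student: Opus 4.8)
The plan is to use the validity of SUM-F (Ax.~\ref{ax:SUMF}) to embed the set $2^{\omega}$ of all functions $\omega\to\{0,1\}$ into the arrows of $|\mathcal{M}|$, which forces $|\mathcal{M}|$ to contain at least $2^{\aleph_0}$ functions and hence to be uncountable. The engine is clause~(i) of Def.~\ref{def:validity}, applied with the nonempty individual set $\omega$ (which lies in $|\mathcal{M}|$ by INF, Ax.~\ref{ax:INF}), followed by clause~(ii): the latter says that for \emph{every} team assignment that selects an ur-function on each singleton $n^+=\{n\}$, a sum function $F_\omega$ is forced to exist in $|\mathcal{M}|$, mapping each $n\in\omega$ to the image of $n$ under the selected ur-function.

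First I would fix, for each $n\in\omega$ and each $i\in\{0,1\}$, the ur-function $u^{i}_{n^+}$ on $n^+$ that maps $n$ to $i$; these exist by UFA (Ax.~\ref{ax:UFA}) and, by EXT-F (Ax.~\ref{ax:EXTF}), each is the unique ur-function on $n^+$ sending $n$ to $i$. Here the Zermelo ordinals $0$ and $1$ both exist in $|\mathcal{M}|$ by EMPTY (Ax.~\ref{ax:EMPTY}) and PAIR (Ax.~\ref{ax:PAIR}) and satisfy $0\neq 1$ by EXT (Ax.~\ref{ax:EXT}). To each $s\in 2^{\omega}$ I then associate the team assignment $g_s$ that assigns $u^{s(n)}_{n^+}$ to the variable $f_{n^+}$ for every $n\in\omega$; by the validity of SUM-F, each $g_s$ forces into $|\mathcal{M}|$ a sum function $F^{s}_{\omega}$ with $F^{s}_{\omega}:n\mapsto s(n)$ for all $n\in\omega$.

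The key step is the injectivity of $s\mapsto F^{s}_{\omega}$. If $s\neq t$, then $s(n)\neq t(n)$ for some $n\in\omega$; since $s(n),t(n)\in\{0,1\}$ and $0\neq 1$, the functions $F^{s}_{\omega}$ and $F^{t}_{\omega}$ have the common domain $\omega$ but differ in their image at the argument $n$, so by EXT-F (Ax.~\ref{ax:EXTF}) they are distinct arrows of $|\mathcal{M}|$. Hence $s\mapsto F^{s}_{\omega}$ is an injection of $2^{\omega}$ into the arrows of $|\mathcal{M}|$, and since $|2^{\omega}|=2^{\aleph_0}>\aleph_0$ the universe $|\mathcal{M}|$ is uncountable.

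The main obstacle---and the conceptual heart of the proposition---is to make rigorous that the validity of the \emph{single} nonstandard sentence SUM-F genuinely deposits all $2^{\aleph_0}$ sum functions $F^{s}_{\omega}$ into one and the same model, rather than merely securing each in some extension. This is exactly the point at which the nonstandard multiple quantifier ${(\forall f_{\alpha^+})}_{\alpha\in\omega}$ defeats the downward L\"{o}wenheim-Skolem construction: a standard first-order sentence can name only countably many witnesses through the countably many constants of $L_\mathfrak{T}$, and a Skolem-hull argument would collapse them into a countable elementary submodel, whereas clause~(ii) of Def.~\ref{def:validity} ranges the multiple quantifier over the \emph{uncountable} class of all families of ur-functions indexed in $\omega$, each independently compelling a distinct witness $F^{s}_{\omega}$. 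I would therefore invoke clause~(ii) of Def.~\ref{def:validity} verbatim, so that the uncountable supply of team assignments is part of what it \emph{means} for SUM-F to hold in $\mathcal{M}$, and the uncountability of $|\mathcal{M}|$ is forced without any appeal to standard first-order compactness or Skolemization.
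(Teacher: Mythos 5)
Your proposal is correct, and it rests on exactly the same engine as the paper's proof---clause (ii) of Def.~\ref{def:validity}, by which the multiple quantifier ${(\forall f_{\alpha^+})}_{\alpha\in\bm{\omega}}$ ranges over \emph{all} team assignments, each of which independently deposits a sum function in $|\mathcal{M}|$---but you package it differently. The paper argues by contradiction: it supposes $\mathcal{M}$ countable, picks a subset $\mathrm{\underline{\mathbf{A}}}\subset\mathbb{N}$ missing from $\mathcal{M}$, builds (by an induction on finite initial segments) a team assignment whose ur-functions send $n$ to $n$ or to a fixed $\mathrm{\underline{\mathbf{h}}}\in\mathrm{\underline{\mathbf{A}}}$ according to whether $n\in\mathrm{\underline{\mathbf{A}}}$, and then uses Ax.~\ref{ax:NEGC} to conclude that the image set of the resulting sum function, which is $\mathrm{\underline{\mathbf{A}}}$ itself, lies in $\mathcal{M}$ after all. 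You instead argue directly: you index team assignments by $s\in 2^{\omega}$, observe that each forces a sum function $F^{s}_{\bm{\omega}}$ into $|\mathcal{M}|$, and separate these witnesses pairwise by EXT-F (Ax.~\ref{ax:EXTF}). Your route buys two things: it avoids the paper's inductive construction of the special team assignment $g^*$ (which, as written, only certifies each finite stage and simply asserts the passage to the limit---your explicit definition of $g_s$ for each $s$ sidesteps this entirely), and it needs no appeal to the codomain axiom. The paper's route buys a sharper moral, namely that every subset of $\mathbb{N}$ is realized as an image set, i.e.\ the full powerset $\mathcal{P}(\mathbb{N})$ is present, not merely uncountably many arrows. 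One small point you may wish to add: since Def.~\ref{def:model} counts both objects and arrows in $|\mathcal{M}|$, exhibiting $2^{\aleph_0}$ arrows suffices; but if a reader insisted on uncountably many \emph{sets}, the Graph Theorem (Th.~\ref{th:graph}) converts your distinct functions $F^{s}_{\bm{\omega}}$ into distinct graphs, which are objects.
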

\begin{proof} Suppose $\mathfrak{T}$ has a model $\mathcal{M}$, and $\mathcal{M}$ is countable. That means that there are only countably many subsets of $\mathbb{N} = \{0, 1, 2, \ldots\}$ in $\mathcal{M}$, and that the powerset $\mathcal{P}(\mathbb{N})$ in $\mathcal{M}$ contains those subsets: we thus assume that there are subsets of $\mathbb{N}$ that are ``missing'' in $\mathcal{M}$. Let ${\rm\underline{\mathbf{A}}}$ be any subset of $\mathbb{N}$ that is \textbf{not} in $\mathcal{M}$, and let ${\rm\underline{\mathbf{h}}} \in {\rm\underline{\mathbf{A}}}$. All numbers $0, 1, 2, \ldots$ are in $\mathcal{M}$ (including ${\rm\underline{\mathbf{h}}}$), so for an arbitrary number ${\rm\underline{\mathbf{n}}} \in \mathbb{N}$ there is thus on account of the ur-function axiom (Ax. \ref{ax:UFA}) an ur-function on $\{{\rm\underline{\mathbf{n}}}\}$ that maps ${\rm\underline{\mathbf{n}}}$ to ${\rm\underline{\mathbf{n}}}$ and an ur-function on $\{{\rm\underline{\mathbf{n}}}\}$ that maps ${\rm\underline{\mathbf{n}}}$ to ${\rm\underline{\mathbf{h}}}$. Since $\mathbb{N}$ is in $\mathcal{M}$, we get on account of SUM-F and Nonstandard Universal Elimination that
\begin{equation}\label{eq:notLS}
\models_{\mathcal{M}} {(\forall f_{p^+})}_{p \in \mathbb{N}} \exists F_{\mathbb{N}} \exists Y\left(F_{\mathbb{N}}:\mathbb{N} \twoheadrightarrow Y \wedge {\bigwedge}_{p\in \mathbb{N}} F_{\mathbb{N}}:p\mapsto \imath q(f_{p^+}: p \mapsto q)\right)
\end{equation}
Eq. (\ref{eq:notLS}) being valid in $\mathcal{M}$ means thus that for \emph{any} team assignment $g$, there is a sum function ${\rm\underline{\mathbf{F}}}^g_{\mathbb{N}}$ in $|\mathcal{M}|$ for which
\begin{equation}
\forall p \in \mathbb{N}({\rm\underline{\mathbf{F}}}^g_{\mathbb{N}}:p \mapsto \imath q({\rm\underline{\mathbf{f}}}^g_{p^+}:p\mapsto q ))
\end{equation}
where the variable ${\rm\underline{\mathbf{f}}}^g_{p^+}$ ranges over ur-functions $g(f_{0^+}), g(f_{1^+}), g(f_{2^+}), \ldots$ That said, the crux is that there is a team assignment $g^*$ which assigns to the variables $f_{0^+}, f_{1^+}, f_{2^+}, \ldots$ , the constants $g^*(f_{0^+}), g^*(f_{1^+}), g^*(f_{2^+}), \ldots$ such that for all $n \in \bm{\omega}$ we have
\begin{equation}\label{eq:UrFunctionForA}
\left\{ \begin{array}{l}
g^*(f_{n^+}): n \mapsto n \ \ if \ n \in{\rm\underline{\mathbf{A}}} \\
g^*(f_{n^+}): n \mapsto {\rm\underline{\mathbf{h}}} \ \ if \ n\not\in{\rm\underline{\mathbf{A}}}
\end{array} \right .
\end{equation}
To see that, note that there is (i) at least one team assignment $g$ such that $g(f_{0^+}) = \{\langle 0,0\rangle \}_{\{0\}}$ so that $g(f_{0^+}):0\mapsto 0$, and (ii) at least one team assignment $g^\prime$ such that $g^\prime(f_{0^+}) = \{\langle 0,{\rm\underline{\mathbf{h}}}\rangle \}_{\{0\}}$ so that $g^\prime(f_{0^+}):0\mapsto {\rm\underline{\mathbf{h}}}$: it is therefore a certainty that there is at least one team assignment $g^0$ such that Eq. (\ref{eq:UrFunctionForA}) is satisfied for $n = 0$. Now assume that there is a team assignment $g^k$ such that Eq. (\ref{eq:UrFunctionForA}) is satisfied for $n \in \{0, 1, \ldots, k\}$. There is, then, at least one team assignment $g^k_{k+1}$ such that for $n \in \{0, 1, \ldots, k\}$ we have $g^k_{k+1}(f_{n^+}) = g^k(f_{n^+})$ and such that $g^k_{k+1}(f_{(k+1)^+}) =
\{\langle k+1,k+1\rangle \}_{\{k+1\}}$ so that $g^k_{k+1}(f_{(k+1)^+}):k+1\mapsto k+1$, \textbf{and} there is then at least one team assignment $g^k_{{\rm\underline{\mathbf{h}}}}$ such that for $n \in \{0, 1, \ldots, k\}$ we have $g^k_{{\rm\underline{\mathbf{h}}}}(f_{n^+}) = g^k(f_{n^+})$ and such that $g^k_{{\rm\underline{\mathbf{h}}}}(f_{(k+1)^+}) = \{\langle k+1,{\rm\underline{\mathbf{h}}}\rangle \}_{\{k+1\}}$ so that $g^k_{{\rm\underline{\mathbf{h}}}}(f_{(k+1)^+}):k+1\mapsto {\rm\underline{\mathbf{h}}}$: it is then a certainty that there is at least one team assignment $g^{k+1}$ such that Eq. (\ref{eq:UrFunctionForA}) is satisfied for $n \in \{0, 1, \ldots, k+1\}$. By induction, there is thus a team assignment $g^*$ such that Eq. (\ref{eq:UrFunctionForA}) is satisfied for all $n \in \bm{\omega}$. Ergo, there is a sum function ${\rm\underline{\mathbf{F}}}^*_{\bm{\omega}}$ for which
\begin{equation}
\forall p \in \bm{\omega}({\rm\underline{\mathbf{F}}}^*_{\bm{\omega}}:p \mapsto g^*(f_{p^+})(p))
\end{equation}
But then we get ${\rm\underline{\mathbf{F}}}^*_{\bm{\omega}}[\bm{\omega}] = {\rm\underline{\mathbf{A}}}$, so ${\rm\underline{\mathbf{A}}}$ is in $\mathcal{M}$, contrary to what was assumed. Ergo, if $\mathfrak{T}$ has a model, it is not countable.
\end{proof}

\begin{Remark}\rm
Prop. \ref{prop:notLS}  is a significant result that does not hold in ZF: given Ths. \ref{SEP} and \ref{SUB}, this provides an argument for considering the present theory $\mathfrak{T}$ to be \textbf{stronger} than ZF. The crux here is that the nonstandard sentence (\ref{eq:notLS}), which is an instance of SUM-F, has to be valid in $\mathcal{M}$: the notion of validity of Def. \ref{def:validity} entails that there are uncountably many variables ${\rm\underline{\mathbf{f}}}^g_{p^+}$, ranging over a family of individual ur-functions indexed in $\mathbb{N}$, in the language of the model. As a result, the subsets of $\mathbb{N}$  that can be constructed within the model are non-denumerable---a model of $\mathfrak{T}$ in which Multiple Universal Elimination, inference rule \ref{inf:MUQE}, applies for at most countably many variables ${\rm\underline{\mathbf{f}}}^g_{p^+}$ is thus nonexisting. Thus speaking, the L\"{o}wenheim-Skolem theorem does not apply because $\mathfrak{T}$ is a \textbf{nonstandard} first-order theory, meaning that it is not the case that $\mathfrak{T}$ can be reformulated as a standard first-order theory, nor that $\mathfrak{T}$ is a second order theory---this latter fact will be proven in the next proposition.\hfill$\Box$
\end{Remark}

\begin{Proposition}\label{prop:not2ndOrder}
$\mathfrak{T}$ is not a second-order theory.
\end{Proposition}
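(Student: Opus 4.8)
The plan is to show that $\mathfrak{T}$ cannot be captured by second-order logic by exhibiting a structural feature of $\mathfrak{T}$ that is incompatible with what second-order quantification provides. The natural strategy is to argue by contrast: in a second-order theory one quantifies over \emph{all} subsets (or all relations/functions) of the domain in a single, monadic second-order quantifier whose range is fixed by the domain itself. The sum function axiom SUM-F, by contrast, uses the multiple quantifier ${(\forall f_{\alpha^+})}_{\alpha\in X}$, whose semantics (Def.~\ref{def:validity}(ii)) is a ``team assignment'' that picks \emph{one ur-function per element} $\alpha\in X$ independently. First I would make precise the claim that a second-order universal quantifier over functions ranges over a set-sized totality determined by the first-order domain, whereas the multiple quantifier ranges over the product family $\prod_{\alpha\in X}\{\text{ur-functions on }\alpha^+\}$, which is not itself a single second-order object of the structure.

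The key steps, in order, are as follows. First, I would recall from Prop.~\ref{prop:notLS} and the surrounding remark the decisive phenomenon: validity of the instance (\ref{eq:notLS}) of SUM-F forces uncountably many distinct variables ${\rm\underline{\mathbf{f}}}^g_{p^+}$ indexed by team assignments into the language of any model, and Multiple Universal Elimination (Rule~\ref{inf:MUQE}) must apply to each. Second, I would invoke the standard fact that second-order logic satisfies the downward L\"owenheim--Skolem property \emph{relative to} Henkin (general-model) semantics, and that under full (standard) semantics a second-order theory is equivalent in expressive strength to a many-sorted first-order theory with comprehension. The strategy is to show that neither option is available to $\mathfrak{T}$: if $\mathfrak{T}$ \emph{were} a second-order theory under Henkin semantics, it would have a countable model, contradicting Prop.~\ref{prop:notLS}; and if it were second-order under full semantics, it would reduce to a standard (possibly many-sorted) first-order theory, again admitting a countable model by the ordinary downward L\"owenheim--Skolem theorem, contradicting Prop.~\ref{prop:notLS}. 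Third, I would package these two cases into a single clean dichotomy and conclude.

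I expect the main obstacle to be making the ``second-order theory'' hypothesis precise enough to exclude \emph{both} semantic readings without circularity. The author has defined $\mathfrak{T}$ as a \emph{nonstandard} first-order theory with its own bespoke satisfaction clauses (Def.~\ref{def:validity}), so the delicate point is to argue that no reformulation of $\mathfrak{T}$ as a second-order theory can reproduce the team semantics of the multiple quantifier. Concretely, the hard part is showing that the multiple quantifier is \emph{not} expressible as a second-order quantifier: a second-order $\forall$ over functions $\mathbb{N}\to V$ would range over graphs that are single objects in the structure, but the team assignments underlying SUM-F need \emph{not} assemble into a function that exists in $|\mathcal{M}|$ until \emph{after} SUM-F guarantees the sum function---indeed that is exactly the mechanism by which the missing subset ${\rm\underline{\mathbf{A}}}$ is forced into the model in Prop.~\ref{prop:notLS}. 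I would therefore center the proof on this non-expressibility: the range of the multiple quantifier is a genuine proper-class-indexed or externally-indexed family that cannot be internalized as a second-order domain, so that the very step which defeats L\"owenheim--Skolem (Prop.~\ref{prop:notLS}) simultaneously witnesses that $\mathfrak{T}$ escapes any second-order formulation, since second-order logic with its fixed power-domain \emph{does} obey the relevant L\"owenheim--Skolem behavior.
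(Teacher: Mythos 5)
There is a genuine gap, and it sits in the second horn of your dichotomy. You claim that under full (standard) semantics a second-order theory ``is equivalent in expressive strength to a many-sorted first-order theory with comprehension'' and hence would admit a countable model by the ordinary downward L\"owenheim--Skolem theorem. That is the wrong way round: it is \emph{Henkin} (general-model) semantics that reduces second-order logic to a many-sorted first-order theory and thereby inherits downward L\"owenheim--Skolem; under full semantics second-order logic notoriously \emph{fails} downward L\"owenheim--Skolem (second-order arithmetic is categorical, second-order analysis has no countable models). Consequently the phenomenon you build the whole argument on---Prop.~\ref{prop:notLS}, the absence of countable models---is perfectly compatible with, indeed characteristic of, being a second-order theory under full semantics. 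It cannot be used to exclude that reading, so the dichotomy collapses: only the Henkin horn survives, and that is not enough. If anything, the uncountability result makes $\mathfrak{T}$ look \emph{more} like a full second-order theory, which is presumably why the proposition needs proving in the first place; your closing paragraph even concedes that the team assignments range over external, un-internalized choices, which is exactly how a full second-order quantifier over functions on $\mathbb{N}$ behaves.

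The paper's own proof takes an entirely different route that never touches L\"owenheim--Skolem. It assumes for contradiction that the multiple quantifier ${(\forall f_{\alpha^+})}_{\alpha\in {\rm\hat{\mathbf{X}}}}$ amounts to second-order quantification, observes that it only quantifies over functional relations on a \emph{fixed set} ${\rm\hat{\mathbf{X}}}$, notes that ZF already quantifies over all functions in $B^{\rm\hat{\mathbf{X}}}$ with ordinary first-order quantifiers $\forall B\,\forall f\in B^{\rm\hat{\mathbf{X}}}$, and concludes by modus tollens from the first-orderness of ZF; this is supplemented by the heuristic that a genuine second-order quantifier $\forall\mathit{\Phi}$ corresponds to a \emph{proper class} of ur-functions whereas a multiple quantifier corresponds to only a \emph{set} of them. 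To repair your proposal you would have to replace the second horn with an argument of that kind (set-sized versus class-sized range), rather than appealing to a L\"owenheim--Skolem property that full second-order logic does not have.
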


\begin{proof}
Let's assume that $\mathfrak{T}$ is a second-order theory. That is, let's assume that the use of a multiple quantifier ${(\forall f_{\{\alpha\}})}_{\alpha\in {\rm\hat{\mathbf{X}}}}$ amounts to second-order quantification. With such a multiple quantifier, we \emph{de facto} quantify over all functional relations on the set ${\rm\hat{\mathbf{X}}}$---note, however, that we do not quantify over all functional relations on the universe of sets! But this has an equivalent in ZF: if ${\rm\hat{\mathbf{X}}}$ is a constant (a set), and ${\rm\hat{\mathbf{Y}}}^{\rm\hat{\mathbf{X}}}$ is the set of all functions from ${\rm\hat{\mathbf{X}}}$ to a set ${\rm\hat{\mathbf{Y}}}$, then with the quantifier $\forall B \forall f\in B^{\rm\hat{\mathbf{X}}}$ in a sentence
\begin{equation}\label{eq:finZF}
\forall B \forall f\in B^{\rm\hat{\mathbf{X}}} \Psi
\end{equation}
we \emph{de facto} quantify over all functional relations on the set ${\rm\hat{\mathbf{X}}}$ too. Ergo, if $\mathfrak{T}$ is a second-order theory, then ZF is a second-order theory too. But ZF is a first-order theory, and not a second-order theory. So by modus tollens, $\mathfrak{T}$ is not a second-order theory.
\end{proof}
\noindent As an additional heuristic argument, we can also directly compare second-order quantification and the present nonstandard first-order quantification with a multiple quantifier ${(\forall f_{\{\alpha\}})}_{\alpha\in {\rm\hat{\mathbf{X}}}}$. Let's first look at second-order quantification with a quantifier $\forall {\it \Phi}$ where the variable ${\it \Phi}$ ranges over functional relations. An arbitrary individual functional relation $\hat{\bm{\Phi}}$ has the entire proper class of things as its `domain', so $\hat{\bm{\Phi}}$ corresponds to a proper class of ur-functions: for an arbitrary thing ${\rm\hat{\bm{\alpha}}}$ there is a $\hat{\bm{\Phi}}$-related ur-function ${\rm\hat{\mathbf{u}}}_{{\rm\bm{\hat{\alpha}}}^+}$ for which ${\rm\hat{\mathbf{u}}}_{{\rm\bm{\hat{\alpha}}}^+}: {\rm\hat{\bm{\alpha}}}\mapsto \imath\beta\hat{\bm{\Phi}}({\rm\hat{\bm{\alpha}}},\beta)$. A quantifier $\forall {\it \Phi}$ is thus equivalent to a \textbf{proper class} of simple quantifiers $f_{{\rm\bm{\hat{\alpha}}}^+}$ ranging over ur-functions on the singleton of a thing ${\rm\bm{\hat{\alpha}}}$. The universe of $\mathfrak{T}$, however, does not contain a set ${\rm\hat{\mathbf{U}}}$ of all things, so there is no multiple quantifier ${(\forall f_{\alpha^+})}_{\alpha \in {\rm\hat{\mathbf{U}}}}$ which would be equivalent to quantifier $\forall {\it \Phi}$: a multiple quantifier ${(\forall f_{\alpha^+})}_{\alpha \in {\rm\hat{\mathbf{X}}}}$ is \textbf{at most} equivalent to an \emph{infinite set} of simple quantifiers $f_{{\rm\bm{\hat{\alpha}}}^+}$ and the degree of infinity is then bounded by the notion of a set. Thus speaking, since a set does not amount to a proper class, a multiple quantifier ${(\forall f_{\alpha^+})}_{\alpha \in {\rm\hat{\mathbf{X}}}}$ does not amount to second-order quantification. See the figure below for an illustration.\\
\ \\ 
\begin{figure}[h!]
\centering
\hfill
\subfigure[$2^{\rm nd}$ order]{\includegraphics[width=0.495\textwidth]{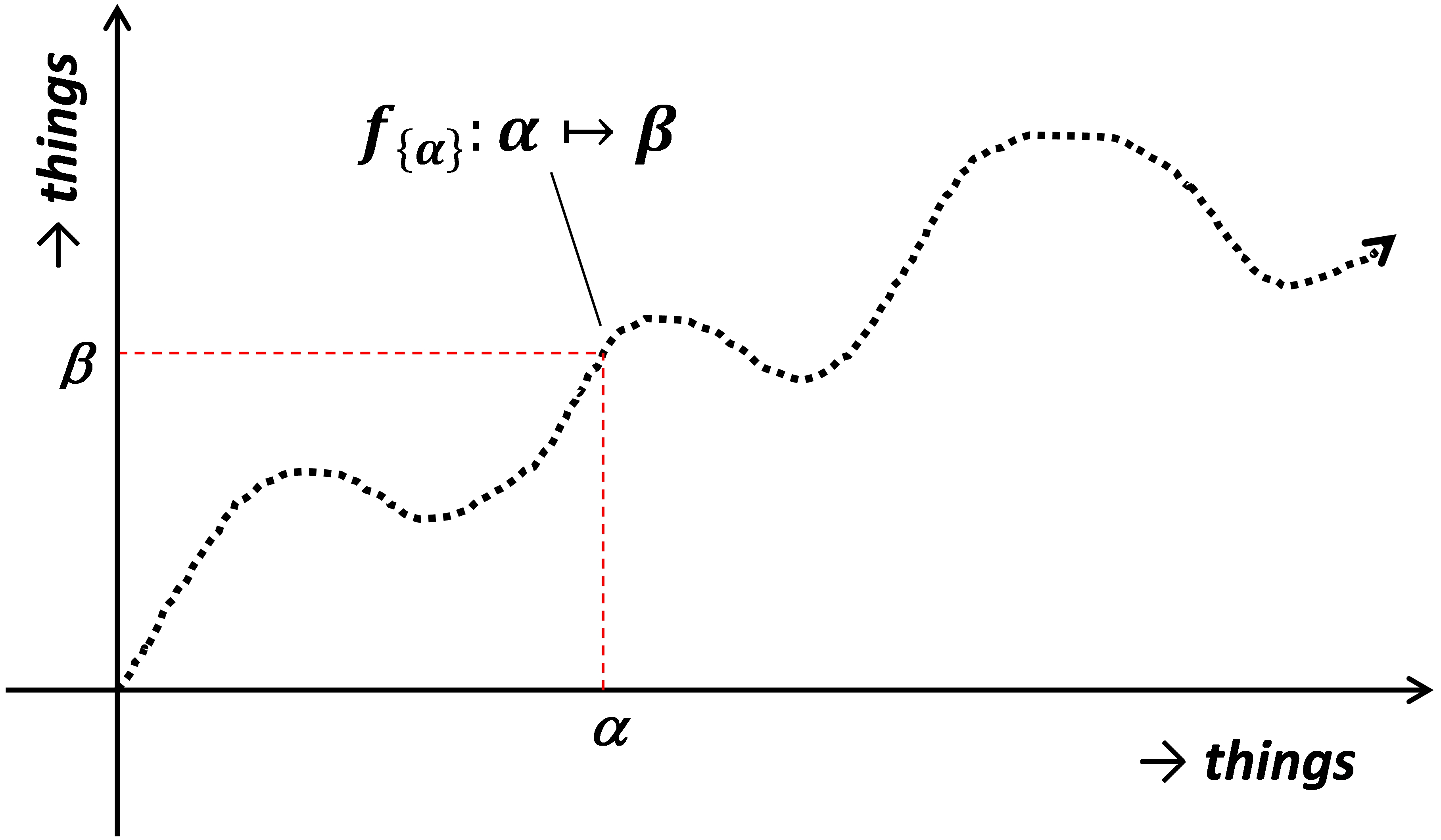}}
\hfill
\subfigure[nonstandard $1^{\rm st}$ order]{\includegraphics[width=0.495\textwidth]{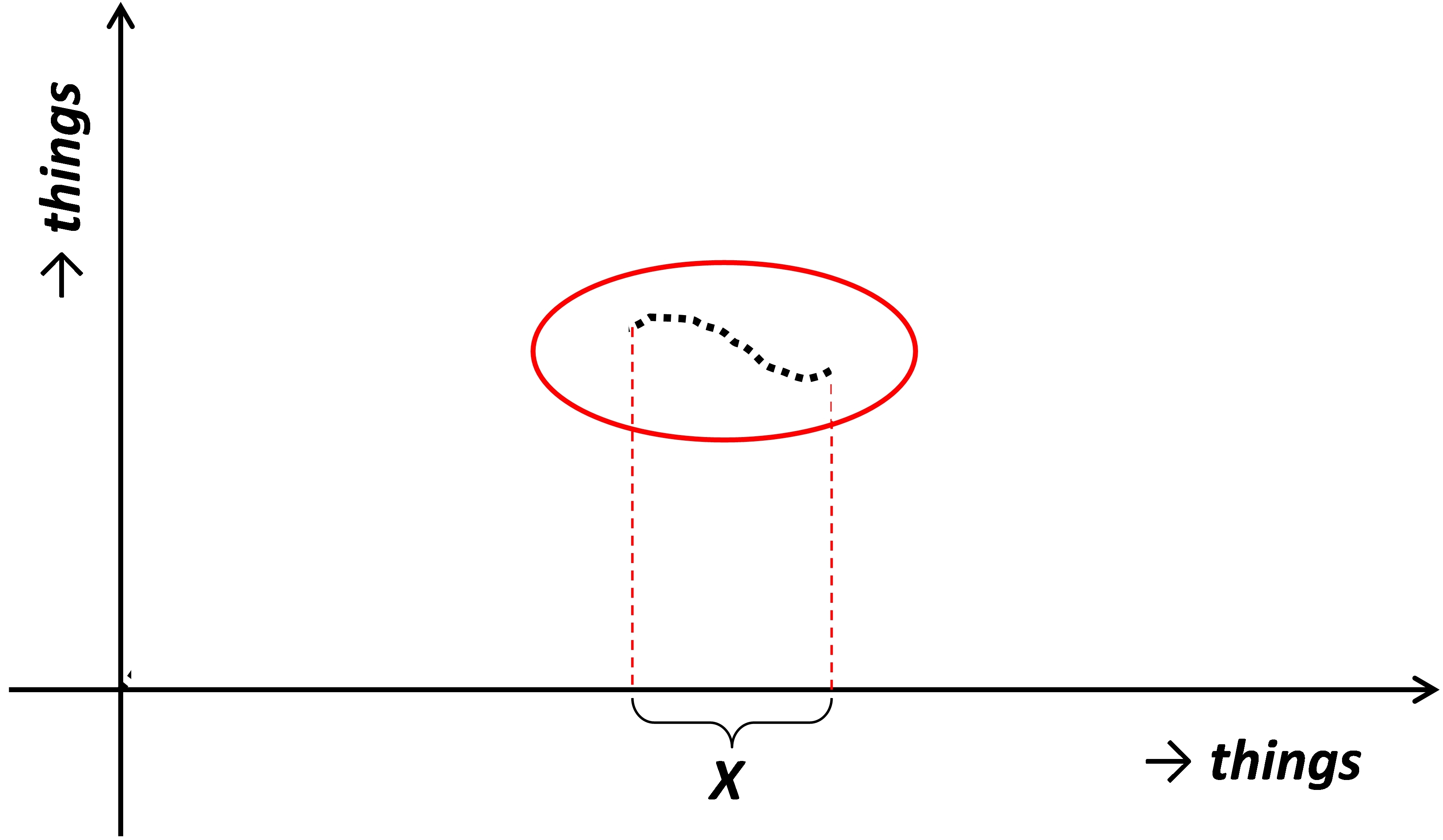}}
\hfill
\caption{
Illustration of the heuristic argument. In both diagrams (a) and (b), all things in the universe of $\emph{T}$ are \emph{for illustrative purposes} represented on the horizontal and vertical axes. In diagram (a), the dotted black line represents an arbitrary functional relation $\hat{\bm{\Phi}}$: each dot corresponds to a constant ur-function as indicated, so the dotted line is equivalent to a \underline{proper class} of ur-functions. In diagram (b) it is indicated of which things on the horizontal axis the set ${\rm\hat{\mathbf{X}}}$ is made up, and each of the black dots within the red oval corresponds to a constant ur-functions: the dotted line segment is thus equivalent to a \underline{set} of ur-functions. So, a multiple quantifier ${(\forall f_{\alpha^+})}_{\alpha \in {\rm\hat{\mathbf{X}}}}$ cannot be equivalent to a quantifier $\forall {\it \Phi}$.}\end{figure}

\subsection{The axioms of category theory}
In Def. \ref{def:universe} it has been assumed that the universe of sets and functions is a category. In this section we prove that the axioms of category theory for the arrows indeed hold for the functions (which are the arrows of the present category). That means that we must prove the following:
\begin{enumerate}[label=(\roman*)]
\item that domain and codomain of any function on any set are unique;
\item that, given sets $X$ and $Y$ and functions $f_X$ and $g_Y$ with $Y= f_X[X]$, there is a function $h_X = g_Y\circ f_X$ such that $h_X$ maps every $\alpha\in X$ to the image under $g_Y$ of its image under $f_X$;
\item that for any set $X$ there is a function $1_X$ such that $f_X \circ 1_X = f_X$ and $1_{f_X[X]}\circ f_X = f_X$ for any function $f_X$ on $X$.
\end{enumerate}
\paragraph{Ad(i): domain and codomain of a function $f_X$ are unique}\ \\
This has already been proven in Sect. \ref{sect:functions}. GEN-F (Ax. \ref{ax:GEN-F}) guarantees that for any set $X$, any function $f_X$ has at least one domain and at least one codomain. Ax. \ref{ax:NEGD} guarantees that no other thing (set or function) than $X$ is a domain of $f_X$. And Ax. \ref{ax:NEGC} guarantees that no other thing (set or function) than the image set $f_X [X]$ is a codomain of $f_X$. That proves uniqueness of domain and codomain.
\paragraph{Ad(ii): existence of the composite of two functions}\ \\
Given a set $X$, a function $f_X$ and a function $g_Y$ with $Y= f_X[X]$, there is for every $\alpha \in X$ precisely one ur-function $h_{\alpha^+}$ such that
\begin{equation}
h_{\alpha^+}: \alpha \mapsto \imath\beta(g_Y:f_X(\alpha)\mapsto\beta)
\end{equation}
So, there is a sum function $H_X$ that maps each $\alpha \in X$ precisely to its image under the ur-function $h_{\alpha^+}$ for which $\imath\xi(h_{\alpha^+}: \alpha \mapsto \xi) = \imath\beta(g_Y:f_X(\alpha)\mapsto\beta)$. This sum function is precisely the composite $H_X = g_Y\circ f_X$. The proof that function composition is associative is omitted.
\paragraph{Ad(iii): existence of an identity function on any set $X$}\ \\
Given a set $X$, there is for every $\alpha \in X$ precisely one ur-function $1_{\alpha^+}$ for which
\begin{equation}
1_{\alpha^+}: \alpha \mapsto \alpha
\end{equation}
Therefore, there is a sum function $F_X$ that maps every $\alpha \in X$ precisely to $\imath\xi(1_{\alpha^+}: \alpha \mapsto \xi) = \alpha$. This sum function is the requested function $1_X$. The proof that this sum function $1_X$ satisfies the properties that $g_X \circ 1_X = g_X$ and $1_{g_X[X]}\circ g_X = g_X$ for any function $g_X$ on $X$, is omitted.\\
\ \\
This shows that the axioms for a category hold for the proper class of functions.

\subsection{Concerns regarding inconsistency}

We address the main concerns regarding inconsistency, which are in particular that the existence of a set of all sets or a set of all functions can be derived from $\mathfrak{T}$.

\begin{Conjecture}
The category of sets and functions does not contain a set of all sets.
\end{Conjecture}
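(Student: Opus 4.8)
The plan is to prove that the category $\mathscr{C}$ contains no set of all sets by the classical Russell/Cantor diagonal argument, adapted to the present framework where the relevant constructive machinery is the Main Theorem (Th. \ref{th:MAIN}) together with the Powerset Axiom (Ax. \ref{ax:POW}). First I would argue by contradiction: suppose there is a set ${\rm\hat{\mathbf{U}}}$ such that every set is an element of ${\rm\hat{\mathbf{U}}}$. The immediate target is to derive the existence of a ``set of all sets that are not elements of themselves'' and reach the Russell contradiction. Concretely, the relation $\Phi(\alpha) :\equiv \alpha \not\in \alpha$ is a well-formed unary relation, so by the Separation Axiom Scheme (Th. \ref{SEP})---already established as a theorem scheme of $\mathfrak{T}$---there is a set ${\rm\hat{\mathbf{R}}}$ with $\forall\alpha(\alpha\in{\rm\hat{\mathbf{R}}} \Leftrightarrow \alpha\in{\rm\hat{\mathbf{U}}} \wedge \alpha\not\in\alpha)$. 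Since ${\rm\hat{\mathbf{R}}}$ is itself a set, it lies in ${\rm\hat{\mathbf{U}}}$ by assumption, and instantiating $\alpha := {\rm\hat{\mathbf{R}}}$ yields ${\rm\hat{\mathbf{R}}}\in{\rm\hat{\mathbf{R}}} \Leftrightarrow {\rm\hat{\mathbf{R}}}\not\in{\rm\hat{\mathbf{R}}}$, a contradiction. Hence no such ${\rm\hat{\mathbf{U}}}$ exists.

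A subtlety specific to $\mathfrak{T}$, which I would address explicitly, is that the elements of a set need not all be sets: an element may be a function on a set, which by Ax. \ref{ax:FnotS} is \emph{not} a set. The conjecture speaks of a ``set of all sets'', so the intended reading is a set whose elements include every set (not every thing); the Separation instance above selects exactly those elements $\alpha$ of ${\rm\hat{\mathbf{U}}}$ satisfying $\alpha\not\in\alpha$, and since $\alpha\not\in\alpha$ holds for functions as well (functions have no elements at all by Ax. \ref{ax:EPS}, so trivially $f_X\not\in f_X$), the relation is harmless on the non-set elements and the self-membership test is still meaningful on ${\rm\hat{\mathbf{R}}}$. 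The only fact I genuinely need is that ${\rm\hat{\mathbf{R}}}$ is a set and that every set belongs to ${\rm\hat{\mathbf{U}}}$.

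Alternatively---and this is the route the paper's own apparatus most naturally suggests---I would run Cantor's theorem instead of Russell's paradox, using the Main Theorem to manufacture the diagonal function. Assuming a set of all sets ${\rm\hat{\mathbf{U}}}$, the Powerset Axiom gives $\mathcal{P}({\rm\hat{\mathbf{U}}})$, and since every subset of ${\rm\hat{\mathbf{U}}}$ is a set, we would have $\mathcal{P}({\rm\hat{\mathbf{U}}}) \subset {\rm\hat{\mathbf{U}}}$. One then constructs, via a function prescription justified by Th. \ref{th:MAIN}, a surjection or the diagonal subset $D = \{\alpha\in{\rm\hat{\mathbf{U}}} \mid \alpha\not\in\alpha\}$ and derives the usual contradiction $D\in D \Leftrightarrow D\not\in D$. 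This version has the advantage of showcasing that giving a function prescription \emph{is} constructing a set, in line with the Main Theorem's intended use.

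The main obstacle I anticipate is not the logical core---the diagonal step is routine---but the \textbf{legitimacy of the Separation instance on a mixed universe}. I must verify that $\Phi(\alpha):\equiv\alpha\not\in\alpha$ is genuinely a well-formed formula of $L_\mathfrak{T}$ open in the variable $\alpha$ ranging over all things (by Def. \ref{def:vocabulary}(vi) and Def. \ref{def:syntax}(ii)), and that Th. \ref{SEP}, whose proof routed through the Ur-Function Axiom and SUM-F, applies to such a $\Phi$ without any hidden restriction to set-valued elements. A secondary delicate point is confirming that Regularity (Ax. \ref{ax:REG}) does not already trivialize the argument by forcing $\alpha\not\in\alpha$ for every $\alpha$: even if it does, the Russell set ${\rm\hat{\mathbf{R}}}$ then coincides with ${\rm\hat{\mathbf{U}}}$ itself, and the contradiction ${\rm\hat{\mathbf{U}}}\in{\rm\hat{\mathbf{U}}} \Leftrightarrow {\rm\hat{\mathbf{U}}}\not\in{\rm\hat{\mathbf{U}}}$ persists, so the conclusion is unaffected---but this interaction with Regularity is exactly the step I would state carefully rather than gloss over.
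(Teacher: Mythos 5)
Your argument is correct, but it is a genuinely different route from the paper's. The paper deliberately labels this statement a \emph{Conjecture} and offers only a heuristic argument in two parts: (i) REG (Ax.~\ref{ax:REG}) excludes that a universal set ${\rm\hat{\mathbf{U}}}$ exists \emph{a priori} (since ${\rm\hat{\mathbf{U}}}\in{\rm\hat{\mathbf{U}}}$ would violate regularity), and (ii) SUM-F cannot \emph{construct} such a set, because a sum function $F_X$ and its image set never exceed the cardinality of the previously constructed domain $X$. You instead give a formal refutation: assuming ${\rm\hat{\mathbf{U}}}$, apply the already-derived Separation scheme (Th.~\ref{SEP}) to $\Phi(\alpha):\equiv\alpha\not\in\alpha$ to obtain the Russell set ${\rm\hat{\mathbf{R}}}$ and the contradiction ${\rm\hat{\mathbf{R}}}\in{\rm\hat{\mathbf{R}}}\Leftrightarrow{\rm\hat{\mathbf{R}}}\not\in{\rm\hat{\mathbf{R}}}$. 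This is sound in $\mathfrak{T}$: $\alpha\not\in\alpha$ is a well-formed unary relation by Def.~\ref{def:syntax}(ii) and clause (iv), the proof of Th.~\ref{SEP} places no restriction on $\Phi$ beyond excluded middle for each element, and your handling of the mixed universe and of the interaction with Regularity is careful and correct (indeed, an even shorter route is available: ${\rm\hat{\mathbf{U}}}\in{\rm\hat{\mathbf{U}}}$ together with PAIR already contradicts REG directly). What your approach buys is an actual derivation of inconsistency from the assumption, upgrading the paper's conjecture to a theorem of $\mathfrak{T}$; what the paper's heuristic buys is a complementary, construction-theoretic explanation of \emph{why} the axioms never manufacture such a set, which is the kind of consideration relevant to the paper's broader (and unproven) consistency concerns rather than to refuting the universal set itself.
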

\paragraph{Heuristic argument:} A set ${\rm\hat{\mathbf{U}}}$ of all sets does not exist (i) because REG (Ax. \ref{ax:REG}) excludes that ${\rm\hat{\mathbf{U}}}$ exists \emph{a priori}, and (ii) because SUM-F, the only constructive axiom of $\mathfrak{T}$ that is not a theorem of ZF, excludes that ${\rm\hat{\mathbf{U}}}$ exists \emph{by construction}. The crux is that one must \emph{first} have constructed the set $X$ \emph{before} one can construct a sum function $F_X$: the set $X$ is thus a regular set, and by applying SUM-F one cannot create a new set with a higher cardinality than the set $X$ because the graph of $F_X$ contains precisely one element for each element of $X$. The same for the image set $F_X[X]$: it cannot have a higher cardinality than $X$. Therefore, SUM-F doesn't allow the construction of a set ${\rm\hat{\mathbf{U}}}$ of all sets.\hfill$\Box$
\begin{Conjecture} 
The category of sets and functions on sets does not contain a set $\hat{\bm{\Omega}}$ of all functions.
\end{Conjecture}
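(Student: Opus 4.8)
The plan is to proceed by reductio ad absurdum, reducing the existence of a set $\hat{\bm{\Omega}}$ of all functions to the existence of a set of all things, which the Axiom of Regularity then forbids. So suppose, towards a contradiction, that $\hat{\bm{\Omega}}$ exists, its elements being precisely all functions on sets. First I would single out the ur-functions whose domain is the singleton $1 = \emptyset^+ = \{0\}$: a function $f$ is such an ur-function exactly when $\exists W(f:1\twoheadrightarrow W)$, which is a legitimate unary predicate of $L_{\mathfrak{T}}$. Applying the Separation scheme (Th.~\ref{SEP}) to $\hat{\bm{\Omega}}$ with this predicate yields a set ${\rm\hat{\mathbf{S}}}$ whose elements are precisely those ur-functions.

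Next I would collect the images of $0$ under the members of ${\rm\hat{\mathbf{S}}}$. By GEN-F (Ax.~\ref{ax:GEN-F}) the relation $\Phi(f,\beta)\equiv(f:0\mapsto\beta)$ assigns to each $f\in{\rm\hat{\mathbf{S}}}$ a unique image $\beta$, so $\Phi$ is functional on ${\rm\hat{\mathbf{S}}}$; applying the Substitution scheme (Th.~\ref{SUB}) then produces a set ${\rm\hat{\mathbf{V}}}=\{\beta \mid \exists f\in{\rm\hat{\mathbf{S}}}(f:0\mapsto\beta)\}$. The decisive point is that, by the Ur-Function Axiom (Ax.~\ref{ax:UFA}) instantiated at $\alpha=0$, for \emph{every} thing $\beta$ there is an ur-function on $1$ mapping $0$ to $\beta$; hence every thing occurs in the image and ${\rm\hat{\mathbf{V}}}$ is a set containing all things whatsoever, sets and functions alike.

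The contradiction then falls out of Regularity. Since ${\rm\hat{\mathbf{V}}}$ is a set it is in particular a thing, so ${\rm\hat{\mathbf{V}}}\in{\rm\hat{\mathbf{V}}}$; but then the nonempty singleton $\{{\rm\hat{\mathbf{V}}}\}$ has ${\rm\hat{\mathbf{V}}}$ as its sole element, and ${\rm\hat{\mathbf{V}}}$ shares the element ${\rm\hat{\mathbf{V}}}$ with $\{{\rm\hat{\mathbf{V}}}\}$, which contradicts REG (Ax.~\ref{ax:REG}). Hence ${\rm\hat{\mathbf{V}}}$ cannot exist, and neither can $\hat{\bm{\Omega}}$. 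I expect the main obstacle to lie not in this regularity step but in the two invocations of the ZF schemes: one must verify that Separation and Substitution, as derived in Ths.~\ref{SEP} and~\ref{SUB}, remain applicable when the objects being separated and mapped are \emph{functions} rather than sets---precisely the transfer gap flagged in the remark following Th.~\ref{SUB}. This reduces to checking that the proof of Th.~\ref{SUB} through Ax.~\ref{ax:UFA} and SUM-F goes through verbatim for a family of ur-functions indexed in a set whose elements are themselves functions, which it does because the Ur-Function Axiom is stated for arbitrary things and the domain $1$ of the relevant functions is nonempty, so that GEN-F supplies the required functionality.
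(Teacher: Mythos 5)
Your argument is sound (to the same heuristic standard as the paper's own) but it takes a genuinely different route. The paper also begins by applying Separation to $\hat{\bm{\Omega}}$, but it extracts the subset $\hat{\bm{\Omega}}_1$ of all \emph{identity} functions; since $1_{\hat{\bm{\Omega}}_1}$ is itself an identity function, it lies in its own domain and maps itself to itself, which directly violates the Axiom of Regularity for Functions (Ax.~\ref{ax:REGF}) --- no universal set and no appeal to Substitution is needed. You instead separate out the functions on the singleton $1=\{0\}$, push them forward along $f\mapsto f(0)$ via Th.~\ref{SUB}, and use the full strength of the Ur-Function Axiom (``for any things $\alpha$ and $\beta$'') to conclude that the resulting image set ${\rm\hat{\mathbf{V}}}$ contains \emph{every} thing, whence ${\rm\hat{\mathbf{V}}}\in{\rm\hat{\mathbf{V}}}$ and REG (Ax.~\ref{ax:REG}) applied to $\{{\rm\hat{\mathbf{V}}}\}$ gives the contradiction. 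What your route buys is a stronger intermediate fact that ties the two conjectures of that subsection together: a set of all functions would immediately yield a universal set of all things, and hence (by one more Separation) a set of all sets, so the second conjecture reduces to machinery already needed for the first; it also shows the contradiction can be reached from REG alone, without invoking REG-F. What the paper's route buys is economy --- one application of SEP rather than SEP plus SUB plus UFA --- and it exhibits REG-F doing exactly the job it was introduced for. Both arguments share the caveat you correctly flag: SEP and SUB must be applicable to a set whose elements are functions, which holds because UFA and SUM-F are stated for arbitrary things.
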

\paragraph{Heuristic argument:} Suppose that we have a set $\hat{\bm{\Omega}}$ such that
\begin{equation}
\forall X \forall f_X (f_X \in \hat{\bm{\Omega}})
\end{equation}
Then on account of Th. \ref{SEP} we can single out the subset $\hat{\bm{\Omega}}_1$ of \emph{all} identity functions:
\begin{equation}
\forall\alpha(\alpha\in\hat{\bm{\Omega}}_1 \Leftrightarrow \alpha \in \hat{\bm{\Omega}} \wedge \exists X(\alpha = 1_X))
\end{equation}
But then there also exists the identity function $1_{\hat{\bm{\Omega}}_1}$, for which  $1_{\hat{\bm{\Omega}}_1}:{\hat{\bm{\Omega}}_1}\twoheadrightarrow{\hat{\bm{\Omega}}_1}$, $1_{\hat{\bm{\Omega}}_1}:1_{\hat{\bm{\Omega}}_1}\mapsto1_{\hat{\bm{\Omega}}_1}$. This latter feature that $1_{\hat{\bm{\Omega}}_1}$ maps itself to itself contradicts the axiom of regularity for functions, REG-F (Ax.\ref{ax:REGF}). Ergo, there is no set $\hat{\bm{\Omega}}$ of all functions.\hfill$\Box$

\section{Conclusions}
\noindent
The main conclusion is that the aim stated in the introduction has been achieved: a theory $\mathfrak{T}$, with a vocabulary containing countably many constants, has been introduced which lacks the two unwanted c.q. pathological features of ZF. Each axiom of $\mathfrak{T}$ is a typographically finite sentence, so contrary to what's the case with infinitary logics, each axiom can be written down explicitly. But not just that: $\mathfrak{T}$ is finitely axiomatized, so contrary to what's the case with ZF, the entire theory $\mathfrak{T}$ can be written down explicitly on a piece of paper. In addition, it has been shown that $\mathfrak{T}$, contrary to ZF, does not have a countable model---if it has a model at all, that is. This failure of the downward L\"{o}wenheim-Skolem theorem for $\mathfrak{T}$ is due to the nonstandard nature of $\mathfrak{T}$.

Furthermore, three reasons can be given as to why $\mathfrak{T}$ might be potentially applicable as a foundational theory for mathematics. First of all, it has been proven that the axioms of ZF, translated in the language $L_{\mathfrak{T}}$ of $\mathfrak{T}$, can be derived from  $\mathfrak{T}$. While we acknowledge that this result does not automatically imply that all theorems about sets derived within the framework of ZF are necessarily also true in the framework of $\mathfrak{T}$ because in the latter framework sets exist whose elements aren't sets, it nevertheless shows that the tools available in the framework ZF for constructing sets are also available in the framework of $\mathfrak{T}$. Secondly, it has been proved that the axioms of a category hold for the universe of discourse that is associated to $\mathfrak{T}$, which is a category of sets and functions: this universe might then serve as the ontological basis for the various (large) categories studied in category theory. Thirdly, $\mathfrak{T}$ is easy to use in everyday's mathematical practice because for any set $X$ we can construct a function $f_X$ by giving a defining function prescription $f_X:X\twoheadrightarrow Y\ , \ f_X: \alpha\overset{\rm def}{\longmapsto}\iota\beta\Phi(\alpha,\beta)$ where $\Phi$ is some functional relation: $\mathfrak{T}$ then guarantees that $F_X$ exists, as well as its graph, its image set, and the inverse image sets for every element of its codomain---ergo, giving a defining function prescription is a tool for constructing sets.

On the other hand, the present theory $\mathfrak{T}$ gives immediately rise to at least three purely aesthetical arguments for rejection. First of all, the universe of $\mathfrak{T}$ contains sets and functions, the latter being objects sui generis: this entails a departure from the adage `everything is a set' that holds in the framework of ZF(C), and that will be enough to evoke feelings of dislike among mathematical monists who hold the position that set theory, in particular ZF or ZFC, has to be the foundation for mathematics. Secondly, although the universe of $\mathfrak{T}$ is a category, the formal language of $\mathfrak{T}$ contains $\in$-relations $t_1 \in t_2$ as atomic formulas: an $\in$-relation is, thus, not reduced to a mapping in the language of category theory, and that fact alone will be enough to evoke feelings of dislike among mathematical monists who hold the position that category theory has to be the foundation for mathematics. Thirdly, the language of $\mathfrak{T}$ entails a rather drastic departure from standard first-order language: that will be enough to evoke feelings of dislike among those who attach a notion of beauty to the standard first-order language of ZF(C), or who consider that the language of category theory is \emph{all of} the language of mathematics.

That said, since $\mathfrak{T}$ is a nonstandard theory there is the obvious risk that $\mathfrak{T}$ is not (relatively) consistent. It is true that we have argued that the category of sets and functions cannot contain a set of all sets nor a set of all functions, but it remains the case that further research may reveal that $\mathfrak{T}$ has unintended consequences which render it inconsistent. Furthermore, a limitation of this study is that the axiom of choice has been left out. We can easily express AC in the language $L_T$ as
\begin{equation}\label{eq:AC}
  \forall X\neq\emptyset( \Theta(X) \Rightarrow \exists f_X \forall Z\in X \forall\gamma(f_X:Z\mapsto\gamma \Rightarrow \gamma \in Z)
\end{equation}
where $\Theta(X)$ stands for
\begin{equation}
\forall \alpha\in X\exists Y(\alpha = Y \wedge \exists \eta(\eta\in Y))\wedge \forall U\in X\forall V\in X\not\exists\beta(\beta\in U \wedge\beta\in V)
\end{equation}
But the question is whether this has to be added as an axiom, or whether it can be derived as a theorem of $\mathfrak{T}$---we certainly have for any $Z\in X$ that there is an ur-function $f_{Z^+}$ such that $\imath\xi(f_{Z^+}:Z\mapsto\xi)\in Z$. We leave this as a topic for further research. Another limitation of the present study is that it has not been investigated whether the calculus has any of the various soundness and completeness properties. It may then very well turn out that minor details in the definitions for the nonstandard syntax and semantics require some revisions, but this is left as a topic for further research---(dis-)proving that these properties hold is a sizeable research project in itself. Of course, contrary to the aesthetic arguments mentioned above, which can be dismissed as nonmathematical, negative results in that direction may yield a serious, mathematical argument to reject the present nonstandard theory as a foundation for mathematics.

We therefore cannot but conclude this paper with the cliche that further research is necessary: additional results are needed to establish whether the nonstandard theory $\mathfrak{T}$ introduced in this paper constitutes an advancement in the foundations of mathematics. The proven fact that $\mathfrak{T}$ lacks the pathological features of ZF may provide a reason for such further research, but it is emphasized that it may turn out to be a dead end. That is to say: the present marriage of set theory and category theory may look promising from a certain perspective, but it still may end in divorce. 

\paragraph{\emph{\textbf{Acknowledgements}}} The author wants to thank Harrie de Swart (Erasmus University Rotterdam), Jean Paul van Bendegem and Colin Rittberg (both Free University of Brussels) for their useful comments. This research has been facilitated by the Foundation Liberalitas (the Netherlands)

\end{document}